\title{\LARGE \bf
A Bandit Learning Method for Continuous Games under Feedback Delays with Residual Pseudo-Gradient Estimate 
}
\newtheorem{theorem}{Theorem}
\newtheorem{assumption}{Assumption}
\newtheorem{lemma}{Lemma}
\newtheorem{envdef}{Definition}
\newcommand{\abs}[1]{\lvert#1\rvert}%
\newcommand{\norm}[1]{\lVert#1\rVert}
\newcommand{\Bnorm}[1]{\Big\lVert#1\Big\rVert}
\newcommand{\diag}[1]{\text{diag}(#1)}
\newcommand{\cl}[1]{\text{cl}(#1)}
\newcommand{\expt}[2]{\mathbb{E}_{#1}[#2]}
\DeclarePairedDelimiter\ceil{\lceil}{\rceil} 
\newcommand\ubar[1]{\stackunder[1.2pt]{$#1$}{\rule{.8ex}{.075ex}}}
\DeclareMathOperator{\dom}{dom}
\DeclareMathOperator{\minimize}{minimize}
\DeclareMathOperator{\subj}{subject\:to}
\newcommand{\playerN}{\mathcal{N}}
\newcommand{\rset}[2]{\mathbb{R}^{#1}_{#2}}
\newcommand{\nset}[2]{\mathbb{N}^{#1}_{#2}}
\newcommand{\vol}[1]{\text{vol}(#1)}
\DeclareMathOperator{\argmin}{argmin}
\DeclareMathOperator{\argmax}{argmax}
\newcommand{\Tblue}[1]{\textcolor{black}{#1}}
\author{Yuanhanqing Huang$^{1}$ and Jianghai Hu$^{1}$
\thanks{This work was supported by the National Science Foundation under Grant No. 2014816 and No.2038410. }
\thanks{$^{1}$The authors are with the Elmore Family School of Electrical and Computer Engineering, Purdue University, West Lafayette, IN, 47907, USA 
        {\tt\small \{huan1282, jianghai\}@purdue.edu}}%
}
\newif\ifproceeding
\newif\ifarxiv
\begin{document}

\maketitle
\thispagestyle{empty}
\pagestyle{empty}

\begin{abstract}
Learning in multi-player games can model a large variety of practical scenarios, where each player seeks to optimize its own local objective function, which at the same time relies on the actions taken by others. 
Motivated by the frequent absence of first-order information such as partial gradients in solving local optimization problems and the prevalence of asynchronicity and feedback delays in multi-agent systems, we introduce a bandit learning algorithm, which integrates mirror descent, residual pseudo-gradient estimates, and the priority-based feedback utilization strategy, to contend with these challenges. 
We establish that for pseudo-monotone plus games, the actual sequences of play generated by the proposed algorithm converge a.s. to critical points. 
Compared with the existing method, the proposed algorithm yields more consistent estimates with less variation and allows for more aggressive choices of parameters. 
Finally, we illustrate the validity of the proposed algorithm through a thermal load management problem of building complexes.

\end{abstract}

\section{INTRODUCTION}

With the proliferation of cyber-physical engineering systems and modern network applications, the non-cooperative multi-player game has emerged as a valuable tool for modeling and investigating the decision-making process of agents with interest conflicts \cite{li2022confluence}. 
Each participant in the game seeks to unilaterally optimize its own objective, whose value also depends on the action taken by others. 
Notable practical applications include thermal load management of autonomous buildings \cite{jiang2021game}, supply-side risk management in power markets \cite{kannan2013addressing}, power control in wireless communication \cite{zhou2021robust}, path planning and control of self-driving cars \cite{liniger2019noncooperative}, etc. 

Over the past few decades, the control and optimization communities have devoted significant effort to developing solution algorithms for non-cooperative games by reformulating them as variational inequalities \cite{facchinei2003finite}. 
Recently, there has been growing interest in distributed solutions under partial information settings, as they offer advantages in scalability and privacy preservation \cite{pavel2019distributed, bianchi2022fast, huang2022distributed}. 
Despite their promise in some cases, the applicability of these methods is often limited by the requirement for the existence of first-order/pseudo-gradient oracles or the full knowledge of the objectives, which may not be available in practical settings. 
Prompted by the need to relax the information requirement, researchers approximate the missing pseudo-gradient information with the actions taken and the resulting objective values. 
This problem can then be fit into the framework of bandit online learning \cite{shalev2012online}, where at every updating step, each player selects an action, observes the realized objective value, and updates its strategy according to the observed result and the process repeats. 

Another practical challenge that hinders the implementation in real-world scenarios is the latency between taking action and receiving bandit feedback, which is further exacerbated in multi-agent systems, where agents could experience heterogeneous delays. 
Latency can arise as a result of significant communication delays or the fundamental limitation that certain actions take time to manifest their effects. 
In the context of routing problems \cite{vu2021fast}, assessing the effectiveness of a navigation strategy entails waiting for a driver to execute the instructions, operate the vehicle, and record the time elapsed. 
In light of the preceding consideration, the primary objective of this work is to propose a bandit online learning algorithm for multi-player continuous games that can ensure convergence despite the presence of feedback delays.  

\textit{Related Work:} 
\Tblue{
In the context of bandit learning in games with instantaneous feedback, Bravo et al. \cite{bravo2018bandit} introduced a bandit mirror descent (MD) method that ensures a.s. convergence when the game is strictly monotone. 
The single-point pseudo-gradient estimate is obtained via the simultaneous perturbation stochastic approximation (SPSA) approach \cite{agarwal2010optimal}. 
In the context of strongly monotone games and their variants, the algorithms proposed in \cite{lin2021optimal, tatarenko2022rate, tatarenko2023convergence, drusvyatskiy2022improved} similarly employ single-point estimates of the pseudo-gradient and attain a $\mathcal{O}(1/t^{1/2})$ convergence rate.  
The single-point estimates are also applied in \cite{tatarenko2020bandit} and \cite{gao2022bandit} for merely monotone games and their variants. 
Given the susceptibility of single-point estimates to large variances, a critical factor impacting the efficiency of algorithms, Tatarenko et al. \cite{tatarenko2022rate} introduced the two-point estimate. 
This strategy mitigates variance-related issues and enhances the convergence rate to $\mathcal{O}(1/t)$ for strongly monotone games. 
In the field of zeroth-order optimization, Zhang et al. \cite{zhang2022new} considered a residual feedback scheme to control the estimation variance. 
By integrating residual pseudo-gradient estimate into the single-call extra-gradient scheme, Huang et al. \cite{huang2023zeroth} developed two bandit algorithms.
The proposed algorithms only require a single query per iteration and ensure a.s. convergence for pseudo-monotone plus games and achieve $O(1/t^{1 - \epsilon})$ convergence rate for strongly monotone games. 
}

To contend with the feedback delays in games, Huang et al. \cite{huang2022convergence} proposed an algorithm based on the improved accelerated gradient descent for potential games, which can tackle cases ranging from sublinear delays to superlinear delays. 
Zhang et al. \cite{zhang2022multi} focused on the general-sum Markov games where the agents are impacted by heterogeneous reward delays and proposed the delay-adaptive multi-agent V-learning to procure coarse-correlated equilibria. 
Of particular relevance is \cite{heliou20agradient}, in which Helious et al. delved into the development of a no-regret bandit learning algorithm for strictly monotone games corrupted by homogeneous sublinear reward delays. 
Nevertheless, the delicate balance between bias and variance of the proposed method is elusive and requires careful calibration. 
Moreover, its stringent requirements on step sizes and query radius hinder its applicability. 

\textit{Contributions:} 
\Tblue{
First, we propose a bandit learning algorithm under feedback delays, where the delays can be heterogeneous but upper-bounded by a constant or homogeneous with a sublinearly growing upper bound. 
Our algorithm integrates mirror descent, residual pseudo-gradient estimates, and the priority-based feedback utilization strategy. 
It is the first algorithm that employs the variance control strategy via single-point residual estimates in the scenario of bandit learning with delays. 
Second, we establish the a.s. convergence of the proposed algorithm for pseudo-monotone plus games. 
While some of the proving techniques have been previously established in \cite{huang2023zeroth}, this paper places additional emphasis on addressing the error caused by delays, which can complicate the problem, particularly when two subsequent realized objective values are required for each single estimate. 
Compared to the existing method in \cite{heliou20agradient}, the proposed algorithm in this work maintains a constant upper bound for the estimation variance and relaxes the conditions on step size and query radius by incorporating the residual pseudo-gradient estimates. 
In addition, we evaluate the performance of the solution algorithms using the thermal load management problem of buildings. 
Compared to the existing work, the proposed algorithm achieves faster and more consistent convergence. 
\ifproceeding
Due to the page limit, complete proofs are presented in \cite{huang2023bandit}. 
\fi
}

\textit{Basic Notations:} 
For a set of vectors $\{v_i\}_{i \in S}$, $[v_i]_{i \in S}$ or $[v_1; \cdots; v_{|S|}]$ denotes their vertical stack. 
For a vector $v$ and a positive integer $i$, $[v]_i$ denotes the $i$-th entry of $v$. 
Denote $\nset{}{+} \coloneqq \nset{}{} \backslash \{0\}$ and $\rset{}{++} \coloneqq (0, +\infty)$. 
We let $\norm{\cdot}_2$ represent the Euclidean norm, $\norm{\cdot}$ a general norm, and $\norm{\cdot}_*$ its dual.
For a set $\mathcal{S}$, let $\mathds{1}_{\mathcal{S}}$ denote the indicator function for this set, i.e., $\mathds{1}_{\mathcal{S}}(x) = 1$ if $x \in \mathcal{S}$ and $0$ otherwise. 
Let $\cl{\mathcal{S}}$ denote the closure of set $\mathcal{S}$, $\text{int}(\mathcal{S})$ the interior, and $\partial \mathcal{S}$ the boundary. 
The symbols $a \wedge b$ and $a \vee b$ stand for the lesser and the greater of the two real numbers $a$ and $b$, respectively. 

\section{SETUP AND PRELIMINARIES}

\subsection{Problem Setup}
In this subsection, we formalize the multi-player continuous game with feedback delays that we will investigate and introduce the assumptions to impose. 
In this $N$-player game $\mathcal{G}$, with the player set given by $\mathcal{N} \coloneqq \{1, \ldots, N\}$, each player $i$ needs to optimize its own local objective by determining its local action $x^i \in \mathcal{X}^i$, where $\mathcal{X}^i \subseteq \rset{n^i}{}$ represents the local strategy space of player $i$. 
For brevity, we let the stack vector $x \coloneqq [x^j]_{j \in \mathcal{N}}$ denote the global action, the stack vector $x \coloneqq [x^j]_{j \in \mathcal{N}_{-i}}$ denote the action taken by all players except player $i$ with $\mathcal{N}_{-i} \coloneqq \mathcal{N} \backslash \{i\}$. 
Similarly, denote the global strategy space $\mathcal{X} \coloneqq \prod_{j \in \mathcal{N}} \mathcal{X}^j \subseteq \rset{n}{}$ with $n \coloneqq \sum_{j \in \mathcal{N}} n^j$. 
Formally, given the action $x^{-i}$ taken by other players, each player $i$ aims to solve the following local problem: 
\begin{align}
\minimize_{x^i \in \mathcal{X}^i} J^i(x^i; x^{-i}). 
\end{align}
The following conditions are imposed regarding the smoothness of objective $J^i$'s and the properties of $\mathcal{X}^i$'s.

\begin{assumption}\label{asp:objt-set}
For each player $i$, the local objective function $J^i$ is continuously differentiable $(C^1)$ in $x$ over the strategy space $\mathcal{X}$.
The individual strategy space $\mathcal{X}^i$ is compact and convex.
Moreover, each $\mathcal{X}^i$ possesses a non-empty interior. 
\end{assumption}

The underlying probability space is given by $(\Omega, \mathcal{F}, \mathbb{P})$. 
One operator we will leverage throughout is the pseudo-gradient operator $F:\mathcal{X} \to \rset{n}{}$, which is defined as the stack of the partial gradient given the smoothness imposed in Assumption~\ref{asp:objt-set}, i.e., 
\begin{align}
F: x \mapsto [\nabla_{x^i} J^i(x^i; x^{-i})]_{i \in \mathcal{N}}. 
\end{align}
The Lipschitz continuity of $F$ then entails the fact that each $J^i$ is $C^1$ and $\mathcal{X}^i$ compact, i.e., there exists some constant $L$, such that for arbitrary $x$ and $y \in \mathcal{X}$, $\norm{F(x) - F(y)}_* \leq L\norm{x - y}$. 
In the same vein, the gradient $\nabla_x J^i: \mathcal{X} \to \rset{n^i}{}$ is also Lipschitz continuous and admits a tighter Lipschitz constant denoted by $L^i$. 
Throughout this work, we will concentrate on the solution concept known as critical points (CPs) \cite[Section~2]{mertikopoulos2022learning}, whose definition is given as follows. 
\begin{envdef}\label{def:cps} (Critical Points)
A decision profile $x_* \in \mathcal{X}$ is a critical point of the non-cooperative game $\mathcal{G}$ if it solves the associated (Stampacchia) variational inequality (VI), i.e., 
\begin{align}\label{eq:cps}
\langle F(x_*), x - x_*\rangle \geq 0, \; \forall x \in \mathcal{X}, 
\end{align}
which is typically denoted by the abbreviation $\text{VI}(\mathcal{X}, F)$. 
\end{envdef}
Besides, the following assumption is postulated regarding the monotonicity of $F$ to facilitate the convergence analysis. 
\begin{assumption}
The pseudo-gradient $F$ is pseudo-monotone plus on $\mathcal{X}$, i.e., $F$ is pseudo-monotone, i.e., for all $x, y \in \mathcal{X}$, $\langle F(y), x - y\rangle\geq 0\implies \langle F(x), x - y\rangle\geq 0$, and satisfies for any action profiles $x, y \in \mathcal{X}$, $\langle F(y), x-y\rangle \geq 0$ and $\langle F(x), x - y\rangle = 0 \implies F(x) = F(y)$. 
\end{assumption}

\Tblue{Pseudo-monotone plus games are a broader class of games than strictly monotone games, but they are not a subset or a superset of merely monotone games. 
Examples of pseudo-monotone plus games that are not merely monotone can be found in \cite[Section~V.A]{huang2023zeroth}\cite{kannan2019optimal}. }

\subsection{Setup for Feedback Delays}
In this work, we consider the scenario where there exists some time lag between the time when an action is taken and the time when the associated realized objective value is received by the player. 
To simplify notation, we let the realized objective value of player $i$ at the $k$-th iteration be denoted by $\hat{J}^i_k$. 
Then, for player $i$, the delay time of $\hat{J}^i_k$ is denoted by $d^i_k$, and this piece of bandit information is available at iteration $\ceil{k + d^i_k}$.
We impose that the delay time should grow at most sublinearly in the iteration $k$ when the delays are homogeneous or be upper bounded by some constant when the delays are heterogeneous, which is formally stated in the assumptions below. 
\begin{assumption}\label{asp:delay}
For each player $i$, the feedback delay $d^i_k$ associated with the realized objective value $\hat{J}^i_k$ is a random variable and $d^i_k \in [0, \bar{d}(k)]$, where $\bar{d}(k) \coloneqq k^{\alpha_d} + \bar{d}$, for some constants $\bar{d} \geq 0$ and $0 \leq \alpha_d < 1$. 
\end{assumption}

\begin{assumption}\label{asp:delay-plus}
Either one of the following statements holds:
\begin{outline}[enumerate]
\1 the delay $d^i_k$ is upper-bounded by a constant $\bar{d}$;
\1 all the players experience the same delay, i.e., $d^1_k = \cdots = d^N_k = d_k$. 
\end{outline}
\end{assumption}

\Tblue{
The issue of handling delays that grow sublinearly or even superlinearly relative to a global clock is receiving increasing attention in the realm of distributed systems \cite{zhou2022distributed}.
For example, in volunteer computing grids, the participation of new and faster workers in the network can undermine the performance of slower workers, causing their computation requests to accumulate quickly over time and resulting in growing delays. 
}

\subsection{Mirror Map and Mirror Descent}
To streamline our subsequent discussion, we briefly introduce mirror descent and related concepts in this subsection. 
The interested readers are referred to \cite[Ch.~4]{bubeck2014theory} for more detailed information. 
Let $\mathcal{B}$ denote a Banach space and $\mathcal{B}^*$ its dual. 
We first let $\psi:\dom\psi \to \rset{}{}$ with $\dom\psi \subseteq \mathcal{B}$ denote a distance generating function (DGF). 
Here, $\dom\psi$ refers to the set where $\psi$ is well-defined and is assumed to be convex and open. 
The DGF $\psi$ satisfies: 
$(\romannum{1})$ $\psi$ is differentiable and $\Tilde{\mu}$-strongly convex for some $\Tilde{\mu} > 0$; 
$(\romannum{2})$ $\nabla \psi(\dom \psi) = \rset{n}{}$; 
$(\romannum{3})$ $\cl{\dom \psi} \supseteq \mathcal{X}$ and $\lim_{x \to \partial (\dom \psi)}$ $\norm{\nabla \psi(x)}_* = +\infty $. 
With the DGF $\psi$ in hand, the mirror map $\nabla \psi^*$ can be defined as:
\begin{align}
\nabla \psi^*(z) = \argmax_{x \in \mathcal{X}} \{\langle z, x\rangle - \psi(x)\}, 
\end{align}
which can be regarded as an extension of projection in general spaces. 
We let $D(\cdot, \cdot): \mathcal{B} \times \mathcal{B} \to \rset{}{}$ represent the Bregman divergence, whose formal expression is given by: 
\begin{align}
D(p,x) = \psi(p) - \psi(x) - \langle \nabla \psi(x), p - x\rangle, \forall p, x \in \dom \psi.
\end{align}
\begin{assumption}\label{asp:recip}
(Bregman Reciprocity) The chosen DGF $\psi$ satisfies that when the sequence $(x_k)_{k \in \nset{+}{}}$ converges to some point $p$, i.e., $\norm{x_k - p} \to 0$, then $D(p, x_k) \to 0$.
\end{assumption}
The above assumption is introduced to enable the Bregman divergence $D(p, \cdot)$ to function as a specific distance metric with respect to $p$, thereby delineating a particular vicinity around $p$.
The prox-mapping $P_{x, \mathcal{X}}: \mathcal{B}^* \to \dom \psi \cap \mathcal{X}$, induced by the Bregman divergence, is defined as:
\begin{align}
P_{x, \mathcal{X}}(y) = \argmin_{x^\prime \in \mathcal{X}}\{\langle y, x - x^\prime\rangle + D(x^\prime, x)\}, 
\end{align}
which plays an essential role in mirror descent and its variants. 
A lemma characterizing mirror maps and prox-mappings that will be frequently used in the subsequent analysis is given below. 
\begin{lemma}\label{le:md-lips}
Consider the ambient Banach space $\mathcal{B}$ equipped with norm $\norm{\cdot}$ and a closed and convex feasible set $\mathcal{X} \subseteq \cl{\dom \psi} \subseteq \mathcal{B}$. 
Suppose $\psi: \dom \psi \to \rset{}{}$ is a DGF, then the mirror map $\nabla \psi^*$ is $1/\Tilde{\mu}$-Lipschitz continuous, i.e., $\forall y_1, y_2 \in \mathcal{B}^*$, $\norm{\nabla \psi^*(y_1) - \nabla \psi^*(y_2)} \leq (1/\Tilde{\mu})\norm{y_1 - y_2}_*$. 
\end{lemma}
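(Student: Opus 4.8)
The plan is to exploit the variational characterization of the mirror map together with the $\Tilde{\mu}$-strong convexity of $\psi$. First I would fix arbitrary dual vectors $y_1, y_2 \in \mathcal{B}^*$ and set $x_1 \coloneqq \nabla\psi^*(y_1)$ and $x_2 \coloneqq \nabla\psi^*(y_2)$. Since $\psi$ is $\Tilde{\mu}$-strongly convex, the objective $x \mapsto \langle y_i, x\rangle - \psi(x)$ is strictly concave, so each $x_i$ is the unique maximizer over the convex set $\mathcal{X}$, and its first-order optimality condition reads $\langle y_i - \nabla\psi(x_i), x - x_i\rangle \leq 0$ for all $x \in \mathcal{X}$.

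Next I would instantiate these two inequalities at the opposite maximizers, i.e., take $x = x_2$ in the condition characterizing $x_1$ and $x = x_1$ in the condition characterizing $x_2$, and add them. After collecting terms and multiplying through by $-1$, this yields
\[
\langle y_1 - y_2, x_1 - x_2\rangle \geq \langle \nabla\psi(x_1) - \nabla\psi(x_2), x_1 - x_2\rangle.
\]
I would then invoke the $\Tilde{\mu}$-strong convexity of $\psi$ in its gradient form, namely $\langle \nabla\psi(x_1) - \nabla\psi(x_2), x_1 - x_2\rangle \geq \Tilde{\mu}\norm{x_1 - x_2}^2$, so that $\langle y_1 - y_2, x_1 - x_2\rangle \geq \Tilde{\mu}\norm{x_1 - x_2}^2$.

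Finally, bounding the left-hand side by the H\"older (Cauchy--Schwarz) inequality, $\langle y_1 - y_2, x_1 - x_2\rangle \leq \norm{y_1 - y_2}_* \norm{x_1 - x_2}$, and chaining it with the previous estimate gives $\Tilde{\mu}\norm{x_1 - x_2} \leq \norm{y_1 - y_2}_*$, which is exactly the asserted $1/\Tilde{\mu}$-Lipschitz bound after dividing by $\Tilde{\mu}$ (the case $x_1 = x_2$ being trivial). I expect the delicate point to be not the algebra but the justification that the argmax defining $\nabla\psi^*$ is attained, unique, and admits the stated first-order condition. This is precisely where properties (i)--(iii) of the DGF enter: strong convexity secures existence and uniqueness of the maximizer, while the barrier/steepness condition $\lim_{x\to\partial(\dom\psi)}\norm{\nabla\psi(x)}_* = +\infty$ confines the maximizer to the interior of $\dom\psi$, so that $\nabla\psi(x_i)$ is well-defined and the optimality inequality is the standard one for a smooth concave program over $\mathcal{X}$.
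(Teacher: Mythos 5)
Your proof is correct and complete: instantiating the first-order optimality conditions of the two maximizers at each other's solutions, adding them, invoking the gradient-monotonicity form of $\Tilde{\mu}$-strong convexity, and finishing with H\"older's inequality is the standard argument for this fact, and you rightly identify that the steepness condition $\lim_{x\to\partial(\dom\psi)}\norm{\nabla\psi(x)}_* = +\infty$ is what keeps the maximizers inside $\dom\psi$ so that $\nabla\psi(x_i)$ and the stated optimality inequality are well-defined. The paper itself gives no in-text proof---it defers entirely to Lemma~A.1 of the cited reference \cite{huang2023zeroth}---and that cited result is proved by essentially this same route, so your approach matches the paper's.
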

\begin{proof}
See \cite[Lemma~A.1]{huang2023zeroth}. 
\end{proof}
\Tblue{
To solve $\text{VI}(\mathcal{X}, F)$, the mirror descent can be expressed as: 
\begin{align}\label{eq:md}
X_{k+1} = P_{X_k, \mathcal{X}}(-\gamma_k g_k) = \nabla\psi^*(\nabla\psi(x_k) - \gamma_k g_k), 
\end{align}
where in the literature of stochastic VI, $g_k$ usually denotes some noise-corrupted first-order information queried at $X_k$ and $\gamma_k$ an appropriate updating step size. }
One prevalent assumption is that there exists a first-order oracle to generate $g_k$ after observing $X_k$, and given some proper filtration $(\mathcal{F}_k)_{k \in \nset{}{+}}$, it holds that $\expt{}{g_k \mid \mathcal{F}_k} = F(X_k)$ and $\expt{}{\norm{g_k}^2_* \mid \mathcal{F}_k}$ is a.s. bounded. 
The convergence properties of the actual sequences and the ergodic sequences have been extensively studied in \cite{mertikopoulos2019learning, mertikopoulos2018optimistic, juditsky2022unifying}. 

\section{BANDIT MIRROR DESCENT WITH FEEDBACK DELAYS}

\subsection{Residual Pseudo-Gradient Estimate}\label{subsec:rpg}
Our blanket assumption throughout is that the first-order oracle that returns $g_k$ is unavailable, and each player can only observe its realized objective value associated with the action taken. 
To address the absence of first-order information, we leverage a pseudo-gradient estimate called the residual pseudo-gradient estimate (RPG) \cite{huang2023zeroth} to approximate the missing information from the observed objective values. 
At each iteration $k$, initially, it is necessary to undertake the following perturbation step: 
\begin{align}\label{eq:perb}
\begin{split}
& \hat{X}^i_{k} = (1 - \frac{\delta_k}{r^i})X^i_{k}+\frac{\delta_k}{r^i}(p^i + r^iu^i_k) = \bar{X}^i_{k} + \delta_k u^i_k,
\end{split}
\end{align}
where 
$u^i_k$ is randomly sampled from the unit sphere in the $n^i-$dimensional Euclidean space and we define $u_k \coloneqq [u^i_k]_{i \in \mathcal{N}}$; 
$\delta_k$ represents the random query radius at iteration $k$;
$\mathbb{B}(p^i, r^i) \subseteq \mathcal{X}^i$ is an arbitrary fixed ball within the feasible set $\mathcal{X}^i$ that centers at $p^i$ with radius $r^i$; 
$\bar{X}^i_{k} \coloneqq (1 - \delta_k/r^i)X^i_{k} + (\delta_k/r^i) p^i$. 
The RPG associated with the states $X_{k}$ at $k$-th iteration leverages the realized objective values from the current iteration $\hat{J}^i_k \coloneqq J^i(\hat{X}^i_k; \hat{X}^{-i}_k)$ and the previous iteration $\hat{J}^i_{k-1} \coloneqq J^i(\hat{X}^i_{k-1}; \hat{X}^{-i}_{k-1})$, which is formally given by 
\begin{align}\label{eq:rpe}
G^i_k \coloneqq \frac{n^i}{\delta_k}(\hat{J}^i_k - \hat{J}^i_{k-1})u^i_k.
\end{align}

To analyze the properties of RPG, a smoothed version for each local objective function $J^i$ is leveraged:
\begin{align}
\Tilde{J}^i_{\delta}(x^i; x^{-i}) \coloneqq \frac{1}{\mathbb{V}^i_{\delta}} \int_{\delta \mathbb{S}_{-i}} \int_{\delta \mathbb{B}_i} J^i(x^i + \Tilde{\tau}^i; x^{-i} + \tau^{-i})d\Tilde{\tau}^i d\tau^{-i}, 
\end{align}
where $\mathbb{S}_{-i} \coloneqq \prod_{j \in \mathcal{N}^{-i}} \mathbb{S}_j \subseteq \rset{n^{-i}}{}$ with each $\mathbb{S}_j$ representing a unit sphere centered at the origin within $\rset{n^j}{}$; $\mathbb{B}_i$ denotes the unit ball centered at the origin inside $\rset{n^i}{}$; $\mathbb{V}^i_{\delta} \coloneqq \vol{\delta \mathbb{B}_i} \cdot \vol{\delta \mathbb{S}_{-i}}$ is the normalizing volume constant of the area that we are integrating over. 
One widely employed decomposition in the existing literature is that 
\begin{align*}
G^i_k =& \nabla_{x^i} J^i(X_{k}) + \big(G^i_k - \expt{}{G^i_k \mid \mathcal{F}_k}\big)  + \big(\expt{}{G^i_k \mid \mathcal{F}_k} - \nabla_{x^i} J^i(X_{k})\big),
\end{align*}
where we let $B^i_k \coloneqq \expt{}{G^i_k \mid \mathcal{F}_k} - \nabla_{x^i} J^i(X_{k})$ represent the systematic error and $V^i_k \coloneqq G^i_k - \expt{}{G^i_k \mid \mathcal{F}_k}$ the stochastic error. 
Denote $B_k \coloneqq [B^i_k]_{i \in \mathcal{N}}$ and $V_k \coloneqq [V^i_k]_{i \in \mathcal{N}}$. 
Let $(\mathcal{F}_k)_{k \in \nset{}{+}}$ be the filtration concerning the random exploration factor, i.e., $\mathcal{F}_k \coloneqq \sigma\{X_0, u_1, \ldots, u_{k-1}\}$. 
Then we have the following lemma to characterize the properties of $B_k$. 
\begin{lemma}\label{le:bias}
Suppose that Assumption~\ref{asp:objt-set} holds. 
Then at each iteration $k$, the conditional expectation satisfies $\expt{}{G^i_k \mid \mathcal{F}_k} = \nabla_{x^i} \Tilde{J}^i_{\delta_k}(\bar{X}_{k})$ a.s. for every $i \in \mathcal{N}$. 
Moreover, the systematic error $B_k$ possesses a decaying upper bound $\norm{B_k} \leq \alpha_B \delta_k$ for some positive constant $\alpha_B$.
\end{lemma}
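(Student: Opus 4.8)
The plan is to compute the conditional expectation exactly and then bound its deviation from the true partial gradient. First I would use that, by the definition of the filtration $\mathcal{F}_k = \sigma\{X_0, u_1, \ldots, u_{k-1}\}$, the quantities $X_k$, $\bar{X}_k$, $\delta_k$ and the previously realized value $\hat{J}^i_{k-1}$ are all $\mathcal{F}_k$-measurable, whereas the fresh exploration direction $u^i_k$ is sampled uniformly from the unit sphere independently of $\mathcal{F}_k$, so that $\expt{}{u^i_k \mid \mathcal{F}_k} = 0$. Applying this to the RPG in \eqref{eq:rpe}, the residual contribution is $(n^i/\delta_k)\,\hat{J}^i_{k-1}\,\expt{}{u^i_k \mid \mathcal{F}_k} = 0$, hence the residual estimate shares the conditional expectation of the ordinary single-point estimate, namely $\expt{}{G^i_k \mid \mathcal{F}_k} = (n^i/\delta_k)\,\expt{}{J^i(\hat{X}^i_k; \hat{X}^{-i}_k)\,u^i_k \mid \mathcal{F}_k}$. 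The requirement here is merely that $\delta_k$ be $\mathcal{F}_k$-measurable so that it factors out of the conditional expectation.

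Next I would identify this quantity with $\nabla_{x^i}\tilde{J}^i_{\delta_k}(\bar{X}_k)$ via the standard smoothing/divergence-theorem argument. Conditionally on $\mathcal{F}_k$ the directions $(u^i_k, u^{-i}_k)$ are uniform on the product of unit spheres and $\hat{X}_k = \bar{X}_k + \delta_k u_k$. Freezing the surrounding directions $u^{-i}_k \in \mathbb{S}_{-i}$ and applying Stokes' theorem in the $i$-th block, the scaled surface integral of $J^i u^i$ over $\delta\mathbb{S}_i$ equals $\nabla_{x^i}$ of the ball-averaged objective $\expt{w}{J^i(\bar{X}^i_k + \delta_k w; \cdot)}$ with $w$ uniform on $\mathbb{B}_i$; averaging over $u^{-i}_k \in \mathbb{S}_{-i}$ reproduces exactly the mixed ball/sphere smoothing $\tilde{J}^i_{\delta_k}$ of the statement. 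This gives $\expt{}{G^i_k \mid \mathcal{F}_k} = \nabla_{x^i}\tilde{J}^i_{\delta_k}(\bar{X}_k)$ a.s., which is the first claim.

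For the bias bound I would split $B^i_k = \big(\nabla_{x^i}\tilde{J}^i_{\delta_k}(\bar{X}_k) - \nabla_{x^i} J^i(\bar{X}_k)\big) + \big(\nabla_{x^i} J^i(\bar{X}_k) - \nabla_{x^i} J^i(X_k)\big)$. For the second (recentering) term, $\bar{X}^i_k - X^i_k = (\delta_k/r^i)(p^i - X^i_k)$ together with the compactness of $\feasetx$ from Assumption~\ref{asp:objt-set} bounds $\norm{\bar{X}_k - X_k}$ by a constant multiple of $\delta_k$, so the $L^i$-Lipschitz continuity of $\nabla_{x^i} J^i$ makes it $O(\delta_k)$. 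For the first (smoothing) term, differentiating under the ball integral gives $\nabla_{x^i}\tilde{J}^i_{\delta}(x) = \expt{w, u^{-i}}{\nabla_{x^i} J^i(x^i + \delta w; x^{-i} + \delta u^{-i})}$; subtracting $\nabla_{x^i} J^i(x)$ and applying the $L^i$-Lipschitz bound to the perturbed argument, whose displacement has norm at most a constant times $\delta$ since $\norm{w} \le 1$ and $u^{-i}$ lies on a product of unit spheres, again yields $O(\delta_k)$. Stacking over $i \in \playerN$ and absorbing all constants into a single $\alpha_B$ (using equivalence of norms in finite dimensions to pass between the blockwise dual norms and $\norm{\cdot}$) gives $\norm{B_k} \le \alpha_B \delta_k$.

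The main obstacle I anticipate is the careful bookkeeping in the second paragraph: establishing the Stokes-theorem identity for the \emph{mixed} smoothing, a ball in the $i$-th block but spheres in the remaining blocks, and confirming that the randomness of $\delta_k$ introduces no difficulty. Once the conditional expectation is pinned to $\nabla_{x^i}\tilde{J}^i_{\delta_k}(\bar{X}_k)$, the remaining estimates are routine Lipschitz bounds, with the only subtlety being to track the displacement norms of the perturbed arguments uniformly in the sampled directions.
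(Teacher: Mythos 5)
Your proposal is correct and follows essentially the same route as the paper's proof (which is deferred to Lemmas~1 and~2 of \cite{huang2023zeroth}): the residual term $\hat{J}^i_{k-1}u^i_k$ vanishes in conditional expectation because $\hat{J}^i_{k-1}$ is $\mathcal{F}_k$-measurable while $u^i_k$ is independent with zero mean, the remaining single-point term is identified with $\nabla_{x^i}\Tilde{J}^i_{\delta_k}(\bar{X}_k)$ via the standard Stokes/divergence-theorem identity for sphere sampling, and the bias bound follows from the Lipschitz continuity of $\nabla_{x^i}J^i$ applied to the two $O(\delta_k)$ displacements (smoothing perturbation and the recentering $\bar{X}_k - X_k$). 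No gaps; the measurability and norm-equivalence points you flag are exactly the routine details the cited proof handles.
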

\begin{proof}
See the proof of \cite[Lemma~1 \& Lemma~2]{huang2023zeroth}. 
\end{proof}

\subsection{Feedback Utilization Strategy}

The systematic error $B^i_k$ and stochastic error $V^i_k$ rooted in the estimate \eqref{eq:rpe} make it inappropriate to merely leverage the most recent first-order estimate multiple times until a more recent one arrives as what is done in \cite{huang2022convergence}; otherwise, the error will accumulate and endanger the convergence of the iterations. 
In view of this, we adopt the priority-based feedback utilization strategy: at each update, the first-order estimate with the earliest timestamp will be used and then discarded, similar to the approach employed in \cite{heliou20agradient}. 
\Tblue{However, the single-point estimate strategy used in \cite{heliou20agradient} mandates solely one realized function value, in which case it suffices to maintain a priority queue exclusively for these values. 
In contrast, the RPG adopted in this work requires two consecutive realized function values to obtain one estimate, which necessitates maintaining a cache to store observed function values and another priority queue for the resulting RPG estimates. 
}

\Tblue{In our feedback utilization strategy}, two information caches $\mathcal{P}^i_J$ and $\mathcal{P}^i_G$ are endowed for each player $i$. 
As reflected in \eqref{eq:rpe}, two subsequent objective values ($\hat{J}^i_k$ and $\hat{J}^i_{k-1}$) are a prerequisite to compute $G^i_k$, and it is possible that one arrives much earlier than the other. 
As such, cache $\mathcal{P}^i_J$ will store all the objective values received and pop out the ones that have been used twice in computing \eqref{eq:rpe}. 
For another thing, caused by the uncertainty in the feedback delay $d^i_k$, it is possible that at some iteration, player $i$ has no available first-order estimates, while for some other iterations, multiple estimates are at player $i$'s disposal. 
This motivates us to design $\mathcal{P}^i_G$ as a priority queue with the timestamp of each pseudo-gradient estimate as the key value. 
For notational convenience, we introduce a map $s^i: \nset{}{+} \to \nset{}{+}$ that maps from the current iteration to the iteration where the first-order estimate originates from. 
When \Tblue{$P^i_G$} is empty at iteration $k$, $s^i(k) = 1$ and the action remains unchanged. 
We also note that the map $s^i$ is implicitly parameterized by the random sample $\omega \in \Omega$ and could vary across this group of players under Assumption~\ref{asp:delay-plus} $(\romannum{1})$. 
To account for the heterogeneity in feedback delay $(d^i_k)_{i \in \mathcal{N}}$, we introduce a group iteration index map $s: \nset{}{+} \to \nset{N}{+}$, that projects from a certain iteration index $k$ to the stack of originated indices $[s^i(k)]_{i \in \playerN}$. 


Below, we present two lemmas that characterize the priority-based feedback utilization strategy, which our subsequent convergence analysis hinges upon. 
\ifarxiv
The proof is reported in Appendix~\ref{appd:delay-strategy}. 
\fi
\ifproceeding
The proof is reported in \cite[Appendix~A]{huang2023bandit}. 
\fi
\begin{lemma}\label{le:fdbk-prop}
For each player $i$ and arbitrary iteration $k \in \nset{}{+}$, we have the following: \\
$(\romannum{1})$ $K^i_{\varnothing}(k) \coloneqq \abs{\{s: \mathcal{P}^i_G = \varnothing, 1 \leq s \leq k\}} \leq \min\{k, \bar{d}(k) + 1\}$; \\ 
$(\romannum{2})$ if $s^i(k) \neq 1$, then $s^i(k) + \bar{d}(s^i(k)) \geq k$. 
\end{lemma}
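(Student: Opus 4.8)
The plan is to track, for each player $i$, the iteration at which every residual estimate becomes usable, and then exploit the fact that the priority queue $\mathcal{P}^i_G$ always dispenses the estimate with the smallest timestamp. The common starting point for both parts is an \emph{availability bound}. The estimate $G^i_j$ can enter $\mathcal{P}^i_G$ only once both realized values $\hat{J}^i_j$ and $\hat{J}^i_{j-1}$ it is built from have been received, i.e.\ at iteration $a^i_j \coloneqq \max\{\ceil{j + d^i_j}, \ceil{(j-1) + d^i_{j-1}}\}$. Since $\bar{d}(\cdot)$ is nondecreasing, the later-indexed value $\hat{J}^i_j$ is the binding one, and Assumption~\ref{asp:delay} yields $a^i_j \leq \ceil{j + \bar{d}(j)}$; in particular, whenever $j + \bar{d}(j) \leq s$ the estimate $G^i_j$ is available by iteration $s$.

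For part $(\romannum{1})$, I would first lower-bound the number of estimates available by iteration $s$. Because $x \mapsto x + \bar{d}(x)$ is increasing, the set $\{j \geq 1 : j + \bar{d}(j) \leq s\}$ contains $\{1, \dots, \lfloor s - \bar{d}(s)\rfloor\}$, so at least $s - \bar{d}(s) - 1$ estimates have entered $\mathcal{P}^i_G$ by iteration $s$. At any iteration $s$ with $\mathcal{P}^i_G = \varnothing$, every arrived estimate has already been dispensed (one per non-empty update), so the number of non-empty updates up to $s$ is at least $s - \bar{d}(s) - 1$, which gives $K^i_\varnothing(s) \leq \bar{d}(s) + 1$. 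Finally, since $K^i_\varnothing(\cdot)$ is nondecreasing and $\bar{d}(\cdot)$ is increasing, evaluating this at the last empty-queue iteration $s_0 \leq k$ yields $K^i_\varnothing(k) = K^i_\varnothing(s_0) \leq \bar{d}(s_0) + 1 \leq \bar{d}(k) + 1$; together with the trivial $K^i_\varnothing(k) \leq k$ this gives the claimed minimum.

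For part $(\romannum{2})$, I would argue by induction over the iterations with $s^i(k) \neq 1$, carrying the invariant $k \leq s^i(k) + \bar{d}(s^i(k))$. Write $m \coloneqq s^i(k)$. If $G^i_m$ entered the queue exactly at iteration $k$, then $k = a^i_m \leq m + \bar{d}(m)$ directly from the availability bound. Otherwise $G^i_m$ was already present at iteration $k-1$ but was not dispensed then, so the estimate used at $k-1$ had a strictly smaller timestamp $m' \coloneqq s^i(k-1) < m$; the inductive hypothesis gives $k - 1 \leq m' + \bar{d}(m')$, and monotonicity of $x \mapsto x + \bar{d}(x)$ together with $m \geq m' + 1$ yields $m + \bar{d}(m) \geq (m' + \bar{d}(m')) + 1 \geq k$. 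The base case is the first non-empty update, where the dispensed estimate is necessarily freshly arrived and the first case applies.

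The main obstacle is the out-of-order and heterogeneous nature of the arrivals: because each residual estimate requires two consecutive realized values, a single delayed observation can hold back two estimates, and estimates can enter $\mathcal{P}^i_G$ in an order unrelated to their timestamps, so the queue can develop a backlog. A naive count of how long $G^i_m$ waits before being dispensed only yields a bound of order $2m$, which is too weak; the inductive formulation above is precisely what lets me avoid counting the backlog explicitly, charging each extra unit of waiting to a strictly smaller timestamp whose own availability bound is inherited through the induction. The only remaining delicacy is the rounding introduced by $\ceil{\cdot}$ in the availability bound, which contributes at most an additive constant and is absorbed into the stated estimates.
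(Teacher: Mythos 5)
Your proposal is correct, and the two halves relate to the paper's proof differently. For part $(\romannum{2})$ you follow essentially the paper's argument: the paper also inducts over the update iterations, splitting into the case where $G^i_{s^i(\cdot)}$ was already queued at the previous update (which forces the updates to be at consecutive iterations and the timestamp to increase by at least one, then monotonicity of $t \mapsto t + \bar{d}(t)$ closes the step --- your Case B) and the fresh-arrival case (your Case A); your observation that monotonicity of $\bar{d}$ makes the later-indexed feedback the binding one is a small but genuine simplification, since it collapses the paper's two-sided disjunction ``either $s^i + \bar{d}(s^i) \geq \ell$ or $s^i - 1 + \bar{d}(s^i - 1) \geq \ell$'' into a single inequality. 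For part $(\romannum{1})$ your route is organized differently: the paper fixes $k$, analyzes the hypothetical worst case in which every feedback is delayed by exactly $\bar{d}(k)$ (so estimates arrive in order and exactly $\bar{d}(k)+1$ iterations lack an update), and then asserts that the true, smaller delays can only make estimates available earlier and hence consumed; you instead count directly at any empty-queue iteration $s$ --- everything that has arrived has been popped, one pop per non-empty iteration, and roughly $s - \bar{d}(s)$ estimates have provably arrived --- and then propagate the bound to arbitrary $k$ through the last empty iteration $s_0 \leq k$. Your version is logically tighter: the paper's domination step quietly relies on a monotone-coupling fact (earlier arrivals never increase the number of empty iterations, because pops are capped at one per iteration) that it never justifies, whereas your count needs no coupling, at the modest price of the extension step via $s_0$, which you handle correctly. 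Two blemishes, which I flag only as quibbles because the paper's own proof shares them: both arguments ignore the ceilings in the arrival times $\ceil{t + d^i_t}$, so strictly speaking the bounds hold only up to integer round-up of $\bar{d}$; and your availability count starts at index $j = 1$, whereas Algorithm~\ref{alg:md-delay} only ever enqueues $G^i_t$ for $t \geq 2$, which would degrade your bound to $\bar{d}(k) + 2$ unless one invokes the paper's convention $\hat{J}^i_0 = \hat{J}^i_1$, $G^i_1 = \boldsymbol{0}_{n^i}$.
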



\subsection{The MD Algorithm with Feedback Delays} 

\begin{algorithm}
\caption{Bandit Learning with Reward Delays of CPs Based on Mirror Descent (Player $i$)}\label{alg:md-delay}
\begin{algorithmic}[1]
\State \textbf{Initialize:} $X^i_1 \in \mathcal{X}^i \cap \dom{\psi^i}$ chosen arbitrary; 
take action $\hat{X}^i_1$ and $\hat{J}^i_1 = J^i(X^i_{1}; X^{-i}_{1})$ will arrive $\ceil{d^i_1}$ iterations later; 
$G^i_{1} = \boldsymbol{0}_{n^i}$; 
$p^i, r^i$ to be the center and radius of an arbitrary ball within the set $\mathcal{X}^i$
\Procedure{At the $k$-th iteration ($k \in \nset{}{+}$)}{}
\State Receive $\mathcal{R}^i_k \coloneqq \{(t, \hat{J}^i_t, u^i_t): k-1 < t + d^i_t \leq k\}$ 
\State $\mathcal{P}_J^i \leftarrow \mathcal{P}_J^i \cup \mathcal{R}^i_k$
\For{$(t, \hat{J}^i_t, u^i_t) \in \mathcal{R}^i_k$}
    \If{$(t+1, \hat{J}^i_{t+1}, u^i_{t+1}) \in \mathcal{P}^i_J$}
        \State $G^i_{t+1} \leftarrow \frac{n^i}{\delta_{t+1}}(\hat{J}^{i}_{t+1} - \hat{J}^{i}_{t})u^{i}_{t+1}$, $\mathcal{P}_G^i \coloneqq \mathcal{P}_G^i \cup \{G^i_{t+1}\}$
    \EndIf
    \If{$(t-1, \hat{J}^i_{t-1}, u^i_{t-1}) \in \mathcal{P}^i_J$}
        \State $G^i_{t} \leftarrow \frac{n^i}{\delta_t}(\hat{J}^{i}_t - \hat{J}^{i}_{t-1})u^{i}_t$, $\mathcal{P}_G^i \coloneqq \mathcal{P}_G^i \cup \{G^i_{t}\}$
    \EndIf
    \State $\mathcal{P}_J^i$ clears up the received feedback that has been utilized twice
\EndFor
\If{$\mathcal{P}_G^i \neq \varnothing$}
\State $s^i(k) \leftarrow \text{earliest index in }\mathcal{P}_G^i$, $\mathcal{P}_G^i \leftarrow \mathcal{P}_G^i \backslash \{ G^i_{s^i(k)} \}$
\Else
\State $s^i(k) \leftarrow 1$ \Comment{No update at this iteration}
\EndIf
\State $X^i_{k+1} \leftarrow P_{X^i_k, \mathcal{X}^i}(-\gamma_k G^i_{s^i(k)})$
\State Randomly sample the direction $u^i_{k+1}$ from $\mathbb{S}_i$
\State $\hat{X}^{i}_{k+1} \leftarrow (1 - \frac{\delta_{k+1}}{r^i})X^{i}_{k+1} + \frac{\delta_{k+1}}{r^i}(p^i + r^i u^{i}_{k+1})$ 
\State Take action $\hat{X}^{i}_{k+1}$ and the realized objective value $\hat{J}^{i}_{k+1} \coloneqq J^i(\hat{X}^{i}_{k+1}; \hat{X}^{-i}_{k+1})$ will arrive $\ceil{d^i_{k+1}}$ iterations later
\EndProcedure
\State \textbf{Return:} $\{\hat{X}^i_{k}\}_{i \in \playerN}$
\end{algorithmic}
\end{algorithm}

The fusion of MD, RPG, and the priority-based feedback utilization strategy results in the proposed algorithm for bandit learning in continuous games with feedback delays, which is detailed in Algorithm~\ref{alg:md-delay}. 
As has been discussed in \cite{huang2023zeroth}, one prominent benefit we can reap from RPG is that the associated stochastic error $V_k$ enjoys bounded variance if the decaying rate of step size is faster than that of query radius. 
It is worth mentioning that, Algorithm~\ref{alg:md-delay} leverages $\hat{G}_k = G_{s(k)} \coloneqq [G^i_{s^i(k)}]_{i \in \mathcal{N}}$ rather than $G_k$ to implement the action update at the $k$-th iteration, which is susceptible to the approximation errors stemming from bandit estimation and feedback delays. 
The existence of feedback delays then disrupts the recurrent relation characterizing $(\hat{G}_k)_{k \in \nset{}{+}}$, as a result of which, the analysis of the boundedness of the stochastic error and the estimates $G^i_k$ in \cite{huang2023zeroth} cannot be directly carried over. 
To facilitate later analysis, we set $\hat{G}_1 = G_1 = G_0 = \boldsymbol{0}_{n}$ and  $\hat{J}^i_0 = \hat{J}^i_1$. 
In the lemma below, we will present the sufficient condition to guarantee that the estimates $\hat{G}_k$ enjoy a uniform upper bound across $k \in \nset{}{+}$ and $\omega \in \Omega$. 
\ifarxiv
The proof is reported in Appendix~\ref{appd:bounded-rpg}. 
\fi
\ifproceeding
The proof is reported in \cite[Appendix~B]{huang2023bandit}. 
\fi

\begin{lemma}\label{le:bounded-rpg}
Suppose that Assumptions~\ref{asp:objt-set} and \ref{asp:delay} hold. 
Moreover, step size $(\gamma_k)_{k \in \nset{}{+}}$ and query radius $(\delta_k)_{k \in \nset{}{+}}$ are monotonically decreasing and satisfy: 
$\lim_{k\to\infty}\gamma_k = 0$, $\sum_{k \in \nset{}{+}} \gamma_k = \infty$, $\lim_{k\to\infty}\delta_k = 0$, $\delta_k/\delta_{k+1}$ is uniformly bounded for all $k \in \nset{}{+}$, $\lim_{k \to \infty} \gamma_k/\delta_k = 0$. 
Considering $(\hat{G}_{k})_{k \in \nset{}{+}}$ generated by Algorithm~\ref{alg:md-delay}, we have $\sup_{k \in \nset{}{+}} \norm{\hat{G}_{k}}_* < \infty$. 
\end{lemma}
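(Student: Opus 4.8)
The plan is to recast the sought uniform bound as a self-limiting recursion on the running maximum
\[
\Gamma_m \coloneqq \max_{0 \le j \le m}\norm{G_j}_*,
\]
where $G_j$ is indexed by the iteration $j$ at which it \emph{originates} (the iteration furnishing $\hat J^i_j$ and $\hat J^i_{j-1}$), with the convention $G_0 = G_1 = \bzero$. Since $\hat G_k = G_{s(k)}$ and, by construction of the priority queue $\mathcal P^i_G$, every component index satisfies $s^i(k) \le k$, it suffices to show $\sup_m \Gamma_m < \infty$; the claim $\sup_k\norm{\hat G_k}_* < \infty$ then follows at once.

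\textbf{Step 1 (a one-step magnitude estimate).} First I would observe that Assumption~\ref{asp:objt-set} ($J^i$ of class $C^1$, $\mathcal X$ compact) makes each $J^i$ Lipschitz with some modulus $L_J$, so \eqref{eq:rpe} together with $\norm{u^i_m}_* \le c_*$ (norm equivalence on the Euclidean unit sphere) gives, for every $\omega \in \Omega$,
\begin{align*}
\norm{G^i_m}_* \le \frac{n^i c_* L_J}{\delta_m}\,\norm{\hat X_m - \hat X_{m-1}}.
\end{align*}
Recalling $\hat X^i_m = X^i_m + (\delta_m/r^i)(p^i - X^i_m) + \delta_m u^i_m$ from \eqref{eq:perb} and the boundedness of each $\mathcal X^i$, I would group the $\delta$-dependent terms to obtain $\norm{\hat X_m - \hat X_{m-1}} \le \norm{X_m - X_{m-1}} + C_\delta\,\delta_{m-1}$ for a constant $C_\delta$. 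The state increment is then controlled through the mirror-map form of the update: since $X_m = \nabla\psi^*(\nabla\psi(X_{m-1}) - \gamma_{m-1}\hat G_{m-1})$ and $X_{m-1} = \nabla\psi^*(\nabla\psi(X_{m-1}))$ (the Bregman projection of $X_{m-1}\in\mathcal X$ onto $\mathcal X$), Lemma~\ref{le:md-lips} yields $\norm{X_m - X_{m-1}} \le (\gamma_{m-1}/\tilde{\mu})\norm{\hat G_{m-1}}_*$.

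\textbf{Step 2 (closing the recursion).} Combining the bounds of Step~1 and using that $\delta_{m-1}/\delta_m$ is uniformly bounded produces, after absorbing all constants (including the norm-equivalence and stacking constants that relate $\norm{\hat G_{m-1}}_*$ to the running max, see below) into $A$ and $C''$, the key inequality, valid for all $m \ge 2$ and all $\omega$:
\begin{align*}
\norm{G_m}_* \le \frac{A\,\gamma_{m-1}}{\delta_m}\,\Gamma_{m-1} + C''.
\end{align*}
The hypotheses $\gamma_k/\delta_k \to 0$ and $\delta_{k-1}/\delta_k$ bounded give $\gamma_{m-1}/\delta_m = (\gamma_{m-1}/\delta_{m-1})(\delta_{m-1}/\delta_m)\to 0$, so there is $m_0$ with $A\gamma_{m-1}/\delta_m \le 1/2$ for all $m \ge m_0$. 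For such $m$, $\Gamma_m = \max\{\Gamma_{m-1}, \tfrac12\Gamma_{m-1}+C''\}$, which forces $\Gamma_m = \Gamma_{m-1}$ whenever $\Gamma_{m-1}\ge 2C''$; hence $\Gamma_m \le \max\{\Gamma_{m_0-1}, 2C''\}$ for every $m$. As $\Gamma_{m_0-1}$ is a finite maximum of finite terms (each $\delta_j>0$ is bounded below on the finite range $j<m_0$ and the iterates lie in the compact $\mathcal X$), this gives $\sup_m\Gamma_m<\infty$.

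\textbf{Main obstacle.} I expect the crux to be precisely the delay-induced coupling folded into Step~2: in the delay-free case $\hat G_{m-1}=G_{m-1}$ and one may telescope a consecutive two-index recursion, whereas here $X_m$ is driven by $G_{s(m-1)}$, whose origination index can lie anywhere in the delay window $[\,m-1-\bar{d}(\cdot),\,m-1\,]$ (cf.\ Lemma~\ref{le:fdbk-prop}$(\romannum{2})$), and under Assumption~\ref{asp:delay-plus}$(\romannum{1})$ the components $s^i(m-1)$ may even differ across players. The running-maximum device is what circumvents having to track these stale, heterogeneous indices: it exploits only the structural fact $s^i(m-1)\le m-1$ to dominate each component $\norm{G^i_{s^i(m-1)}}$, hence $\norm{\hat G_{m-1}}_*$, by a constant multiple of $\Gamma_{m-1}$, after which the vanishing gain $\gamma_{m-1}/\delta_m$ supplies the contraction. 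A secondary point deserving care is keeping the normalizing radius $\delta_m$ pinned to the \emph{origination} index $m$ rather than the usage index, so that the factor $1/\delta_m$ in Step~1 is exactly the radius entering the state-increment bound; this alignment is what lets the two $\delta$-factors interact cleanly and is where the argument of \cite{huang2023zeroth} must be adapted to the delayed setting.
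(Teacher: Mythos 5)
Your proposal is correct, and while your Step 1 magnitude estimates (Lipschitz/mean-value bound on the residual, the perturbation bound with the $\delta_{m-1}/\delta_m$ ratio, and the $1/\Tilde{\mu}$-Lipschitz mirror map from Lemma~\ref{le:md-lips}) coincide with the paper's, the way you close the recursion is genuinely different. The paper keeps the recursion on the \emph{usage-indexed} estimates, bounding $\norm{\hat{G}_k}_*$ by the heterogeneous stale terms $\norm{\hat{G}_{s^i(k)-1}}_*$; to tame the player-dependent indices it introduces a pathwise argmax map $\pi_\omega(k)$ over the delay window $[1\vee(k-\bar{d}(k)),\,k-1]$, invokes Lemma~\ref{le:fdbk-prop} so that $\pi_\omega(k)\ge k-\bar{d}(k)\to\infty$ --- this is exactly where the sublinearity $\alpha_d<1$ of Assumption~\ref{asp:delay} is consumed --- and then runs a two-regime analysis: recursion chains unrolled back to $\norm{\hat{G}_1}_*=0$ (bounded by an explicit constant $g_\star$) and a stable linear recursion with gain $\varepsilon<1$ thereafter. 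Your running maximum $\Gamma_m$ over \emph{origination-indexed} estimates replaces all of this bookkeeping: it uses only causality ($s^i(m-1)\le m-1$) to dominate $\norm{\hat{G}_{m-1}}_*$ by a constant multiple of $\Gamma_{m-1}$, and the vanishing gain $\gamma_{m-1}/\delta_m\to 0$ (which needs only the stated step-size/radius conditions, applied at consecutive indices) makes the running max stationary once it exceeds $2C''$, giving $\sup_m\Gamma_m\le\max\{\Gamma_{m_0-1},2C''\}$ uniformly in $\omega$. This is both more elementary (no $\pi_\omega$, no case split, no chain unrolling) and strictly more general: your argument never touches the delay bound $\bar{d}(k)$, so for this lemma the conclusion survives arbitrary, even superlinearly growing, causal delays, whereas the paper's route genuinely requires Assumption~\ref{asp:delay}. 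Two cosmetic points: the display "$\Gamma_m=\max\{\Gamma_{m-1},\tfrac12\Gamma_{m-1}+C''\}$" should read "$\le$" (equality with $\Gamma_{m-1}$ then follows from monotonicity of the running max when $\Gamma_{m-1}\ge 2C''$), and the component-versus-stack norm constants you wave at are glossed in exactly the same way in the paper, so no loss there.
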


For the feedback-delay scenario, the randomness originates from two sources: the random exploration factor at each iteration $u^i_k$ and the feedback delay $d^i_k$ associated with the realized objective value $\hat{J}^i_k$. 
Let the $\sigma$-field reflecting the delay information up to iteration $k$ be denoted as:
\begin{align}
\mathcal{F}^d_k \coloneqq \sigma\{d^i_t: \forall i \in \mathcal{N}, 1\leq t\leq k\}  
\end{align}
Note that $s^i(t) \in \mathcal{F}^d_k$ for all $1 \leq t \leq k$ and the available information respecting random exploration factors $u^i_t$ depends on $\mathcal{F}^d_k$. 
Based on the observation, we are prompted to consider a more suitable $\sigma$-field $\Tilde{\mathcal{F}}_{s(k)}$ for this specific problem, rather than the $\sigma$-field $\mathcal{F}_{k}$ previously discussed in Sec.~\ref{subsec:rpg}, which is defined as: 
\begin{align}
 \Tilde{\mathcal{F}}_k \coloneqq \sigma\big(\mathcal{F}^d_k \cup \{u^i_{s^i(t)}: \forall i \in \mathcal{N}, 1 \leq t \leq k-1\}\big). 
\end{align} 
With this definition in hand, we can then proceed to discuss the asymptotic convergence results for the actual sequence of play generated by Algorithm~\ref{alg:md-delay}. 
\ifarxiv
The proof can be found in Appendix~\ref{appd:convg}. 
\fi
\ifproceeding
The proof can be found in \cite[Appendix~C]{huang2023bandit}. 
\fi
\begin{theorem}\label{thm:as-convg}
Suppose the game $\mathcal{G}$ under consideration satisfies Assumptions~\ref{asp:objt-set} to \ref{asp:recip} and all the players of $\mathcal{G}$ follow Algorithm~\ref{alg:md-delay} throughout the process. 
Moreover, the step size $(\gamma_k)_{k \in \nset{}{+}}$ and the query radius $(\delta_k)_{k \in \nset{}{+}}$ are chosen as $\gamma_k = \gamma_0/(k + K_{\gamma})^{\alpha_\gamma}$ and $\delta_k = \delta_0/(k + K_{\delta})^{\alpha_\delta}$, respectively. 
The selected parameters satisfy 
$
0.5 < \alpha_\gamma \leq 1, \alpha_{\gamma} > \alpha_{\delta}, \alpha_{\gamma} + \alpha_{\delta} > 1, 2\alpha_{\gamma} - \alpha_d > 1.
$
Then the actual sequence of play $(\hat{X}_k)_{k \in \nset{}{+}}$ converges to one of the CP $x_*$ almost surely. 
\end{theorem}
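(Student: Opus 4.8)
The plan is to treat the Bregman divergence $D(x_*, X_k)$ to a fixed critical point $x_*$ as an energy function and to build an almost-supermartingale recursion that can be closed by the Robbins--Siegmund theorem. First I would combine the update rule of Algorithm~\ref{alg:md-delay} with the prox-mapping estimate underlying Lemma~\ref{le:md-lips} to obtain a one-step inequality of the form
\begin{align*}
D(x_*, X_{k+1}) \leq D(x_*, X_k) - \gamma_k \langle \hat{G}_k, X_k - x_*\rangle + \frac{\gamma_k^2}{2\Tilde{\mu}} \norm{\hat{G}_k}_*^2,
\end{align*}
where $\hat{G}_k = G_{s(k)}$ is the delayed estimate actually applied. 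The key is to center $\hat{G}_k$ at the true pseudo-gradient via the decomposition
\begin{align*}
\hat{G}_k = F(X_k) + \big(F(X_{s(k)}) - F(X_k)\big) + B_{s(k)} + V_{s(k)},
\end{align*}
separating the true pseudo-gradient at $X_k$, a delay-induced gap, the smoothing bias $B_{s(k)}$ supplied by Lemma~\ref{le:bias}, and the stochastic error $V_{s(k)}$. Pairing with $X_k - x_*$, the leading inner product is nonnegative: since $x_*$ solves $\text{VI}(\mathcal{X}, F)$ we have $\langle F(x_*), X_k - x_*\rangle \geq 0$, and pseudo-monotonicity upgrades this to $\langle F(X_k), X_k - x_*\rangle \geq 0$, which is the dissipative term that drives convergence.

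Next I would show that every remaining term is summable, so that Robbins--Siegmund applies. The quadratic term is handled by $\sup_k \norm{\hat{G}_k}_* < \infty$ from Lemma~\ref{le:bounded-rpg} (which uses $\gamma_k/\delta_k \to 0$, i.e.\ $\alpha_\gamma > \alpha_\delta$) together with $\sum_k \gamma_k^2 < \infty$, guaranteed by $\alpha_\gamma > 0.5$. The bias contributes $\gamma_k \norm{B_{s(k)}} \norm{X_k - x_*}$; using $\norm{B_{s(k)}} \leq \alpha_B \delta_{s(k)}$, compactness of $\mathcal{X}$, the uniform boundedness of $\delta_k/\delta_{k+1}$, and Lemma~\ref{le:fdbk-prop}$(\romannum{2})$ to compare $\delta_{s(k)}$ with $\delta_k$, this sums whenever $\sum_k \gamma_k \delta_k < \infty$, i.e.\ $\alpha_\gamma + \alpha_\delta > 1$. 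For the delay gap I would bound the state drift by $\norm{X_{s(k)} - X_k} \leq (1/\Tilde{\mu}) \sum_{j=s(k)}^{k-1} \gamma_j \norm{\hat{G}_j}_*$ via Lemma~\ref{le:md-lips} and the boundedness of the estimates, then invoke Lemma~\ref{le:fdbk-prop}$(\romannum{2})$ (so that $k - s(k) \leq \bar{d}(s(k))$) and Lipschitz continuity of $F$ to get $\gamma_k \norm{F(X_{s(k)}) - F(X_k)} = O(\gamma_k^2 k^{\alpha_d})$, which sums precisely under $2\alpha_\gamma - \alpha_d > 1$. Thus the four parameter conditions map one-to-one onto the four summability requirements.

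The stochastic term $\sum_k \gamma_k \langle V_{s(k)}, X_k - x_*\rangle$ must then be shown to be an a.s.\ convergent martingale, and this is where $\Tilde{\mathcal{F}}_{s(k)}$ is indispensable: after reindexing, one verifies $\expt{}{V_{s(k)} \mid \Tilde{\mathcal{F}}_{s(k)}} = 0$ with $X_k - x_*$ measurable, so the increments form a martingale difference sequence whose conditional second moments are summable (bounded $\norm{V_{s(k)}}$, compact $\mathcal{X}$, $\sum_k \gamma_k^2 < \infty$). Robbins--Siegmund then yields that $D(x_*, X_k)$ converges a.s.\ and $\sum_k \gamma_k \langle F(X_k), X_k - x_*\rangle < \infty$ a.s. Since $\sum_k \gamma_k = \infty$, some subsequence satisfies $\langle F(X_{k_m}), X_{k_m} - x_*\rangle \to 0$; by compactness extract $X_{k_m} \to \hat{x}$, so continuity gives $\langle F(\hat{x}), \hat{x} - x_*\rangle = 0$. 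Combined with $\langle F(x_*), \hat{x} - x_*\rangle \geq 0$, the pseudo-monotone plus property forces $F(\hat{x}) = F(x_*)$, whence $\hat{x}$ itself solves the VI and is a critical point. Instantiating the recursion at $\hat{x}$ shows $D(\hat{x}, X_k)$ converges; Bregman reciprocity (Assumption~\ref{asp:recip}) sends it to $0$ along $(k_m)$ and therefore everywhere, and strong convexity of $\psi$ gives $X_k \to \hat{x}$. Finally $\norm{\hat{X}_k - X_k} \to 0$ as $\delta_k \to 0$, so the actual play converges to $\hat{x}$ as well.

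I expect the principal obstacle to be the delay-corrupted stochastic term and the associated measurability bookkeeping. Because $\hat{G}_k = G_{s(k)}$ carries information from an earlier iteration, the naive filtration $\mathcal{F}_k$ is inadequate, and recovering $\expt{}{V_{s(k)} \mid \Tilde{\mathcal{F}}_{s(k)}} = 0$ demands the delay-adapted filtration together with the reindexing made possible by the priority-based utilization strategy. Controlling the delay gap in turn relies on the sublinear-growth bound of Lemma~\ref{le:fdbk-prop}$(\romannum{2})$, and a secondary technical point is that the solution set may be uncountable, so the Robbins--Siegmund conclusion must first be secured on a single probability-one event for a countable dense family of critical points and then extended by continuity before being evaluated at the random limit $\hat{x}$.
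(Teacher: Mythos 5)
Your proposal follows the same architecture as the paper's proof: the standing mirror-descent inequality for $D(x_*, X_k)$, the decomposition of the delayed estimate into the true pseudo-gradient, a delay-induced gap, the smoothing bias, and the stochastic error, the one-to-one mapping of the four parameter conditions onto the four summability requirements, Robbins--Siegmund, and the closing subsequence argument via pseudo-monotonicity plus and Bregman reciprocity. Your delay-gap estimate of order $O(k^{\alpha_d - 2\alpha_\gamma})$, the bias and quadratic bounds, and even your remark about securing the Robbins--Siegmund null set for a countable dense family of critical points (a subtlety the paper itself glosses over when it substitutes the random limit $X_\star(\omega)$ into the recursion) all match or refine the paper's reasoning.

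There is, however, a genuine gap: you never account for the iterations at which a player's estimate cache $\mathcal{P}^i_G$ is empty. On such iterations Algorithm~\ref{alg:md-delay} sets $s^i(k) = 1$ and applies $\hat{G}^i_k = \boldsymbol{0}_{n^i}$, so your centering $\hat{G}_k = F(X_k) + \big(F(X_{s(k)}) - F(X_k)\big) + B_{s(k)} + V_{s(k)}$ is simply false there, and Lemma~\ref{le:fdbk-prop}$(\romannum{2})$, which you invoke to bound $k - s^i(k)$, explicitly excludes the case $s^i(k) = 1$. The paper handles this by carrying the indicator of the event $E_k = \cap_{i \in \mathcal{N}} E^i_k$ that all caches are nonempty: the dissipative term becomes $-\gamma_k\langle F(X_k), X_k - x_*\rangle \mathds{1}_{E_k}$, and the partial-update cross terms $\mathds{1}_{(E_k)^c \cap E^i_k}\langle \nabla_{x^i}J^i(X_k), X^i_k - x^i_*\rangle$ (which carry no sign, since individual coordinates of a pseudo-monotone operator paired with coordinate differences need not be nonnegative) are bounded by constants and shown to occur finitely often under Assumption~\ref{asp:delay-plus}$(\romannum{1})$ or never under Assumption~\ref{asp:delay-plus}$(\romannum{2})$. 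Consequently, Robbins--Siegmund only delivers $\sum_{k} \gamma_k \langle F(X_k), X_k - x_*\rangle\mathds{1}_{E_k} < \infty$ a.s., and your final step ``since $\sum_k \gamma_k = \infty$'' is insufficient: under homogeneous sublinear delays the number of no-update iterations among the first $K$ can grow like $K^{\alpha_d}$, so one must additionally prove $\sum_k \gamma_k \mathds{1}_{E_k} = \infty$ for every sample path. This is exactly the paper's Lemma~\ref{le:insum-gamma}, proved via Lemma~\ref{le:fdbk-prop}$(\romannum{1})$ and a worst-case-placement argument that exploits $\alpha_d < 1$; without it the liminf argument does not start. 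A minor further imprecision: the zero-mean property of the noise must be taken conditionally on $\Tilde{\mathcal{F}}_k$ (under which $X_k$ and $\mathds{1}_{E^i_k}$ are measurable while $u^i_{s^i(k)}$ is still unused, thanks to the strictly increasing consumption order of the priority queue), not on $\Tilde{\mathcal{F}}_{s(k)}$, with respect to which $X_k$ is generally not measurable.
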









\section{Numerical Experiments}

To illustrate the effectiveness of the proposed algorithm, we provide a numerical example of the thermal load management problem in a building complex. 
Suppose the load aggregator under study consisting of $N$ buildings, indexed by $\mathcal{N} \coloneqq \{1, \ldots, N\}$. 
Over a given time horizon $\mathcal{T} \coloneqq \{1, \ldots, T\}$, we use $x^i_t$ to represent the power consumption of building $i$ at a certain time slot $t \in \mathcal{T}$.
Moreover, the concatenations $x^i \coloneqq [x^i_t]$ and $x \coloneqq [x^i]$ denote the power profile of building $i$ for all time slots and the energy profile of all buildings in this load aggregator, respectively. 
The internal pricing mechanism under consideration \cite{jiang2021game} discourages peak-demand usage by incorporating an approximate version of Shapley value, where each building $i$'s share of peak demand is defined as 
$R^i(x) = \sum_{\mathcal{C}_j: i \in \mathcal{C}_j} \frac{(N - \abs{\mathcal{C}_j})!(\abs{\mathcal{C}_j} - 1)!}{N!}\Big(V(\mathcal{C}_j, x) - V(\mathcal{C}_j\backslash \{i\}, x)\Big),
$
where $\mathcal{C} \coloneqq \{\mathcal{C}_{1}, \ldots \mathcal{C}_{n_c}\}$ with each $\mathcal{C}_j \subseteq \mathcal{N} (j = 1, \ldots, n_c)$ denotes the clique set; 
the function $V$ is defined as $V(\mathcal{C}_j, x) = \frac{1}{C}\log \Big( \sum_{t \in \mathcal{T}} \exp\big(\sum_{l \in \mathcal{C}_j} C x^l_{t}\big) \Big)$, \Tblue{where $C \in \rset{}{++}$ is a constant sufficiently large to make the log-sum-exp function a proper smooth approximation to the maximum function. }

With knowledge of the power profile $x^{-i}$ of other buildings, each building $i$ seeks to find an optimal power control strategy, which can be expressed as follows: 
\begin{align}
\begin{split}
& \minimize_{x^i \in \mathcal{X}^i} \;(p_e)^Tx^i + Q^i(x^i) + p_d \cdot R^i(x)  \\
& \subj \;r^i_{t} = a^i r^i_{t-1} + b^i x^i_{t}, \; y^i_{t} = c^i r^i_{t},  \\
& \qquad \qquad  \ubar{y}^i_{t} \leq y^i_{t} \leq \bar{y}^i_{t}, \; 0 \leq x^i_{t} \leq \bar{x}^{i}, \forall t \in \mathcal{T}, 
\end{split}
\end{align}
where $p_e \in \rset{T}{++}$ denotes the energy price and $p_d \in \rset{}{++}$ penalized the peak electricity usage of the aggregator; 
a strongly convex quadratic function $Q^i$ is introduced for the convergence purpose; 
$y^i_t$ denotes the temperature of building $i$ at the $t$-th time slot and its dynamics are characterized by the first and second equality constraints; 
the third constraint enforces that the temperature $y^i_t$ should be within a comfort zone $[\ubar{y}^i_{t}, \bar{y}^i_{t}]$; 
the last constraint reflects the system power capacity for each building. 
It can be proved that this multi-player game admits a potential function 
$\Phi(x) = \sum_{i \in \playerN} \Big((p_e)^Tx^i + Q^i(x^i)\Big) + p_d \cdot \sum_{\mathcal{C}_j \in \mathcal{C}} \frac{(N - \abs{\mathcal{C}_j})!(\abs{\mathcal{C}_j} - 1)!}{N!}V(\mathcal{C}_j, x)$. 

In the experiments, twenty buildings $(N=20)$ are involved in this game, and each building needs to determine its energy profile for four different time slots $(T=4)$. 
Suppose there are six cliques and the number of buildings within each clique ranges from three to eight. 
For $Q^i(x^i) = (x^i)^T\diag{\lambda_{i1}, \ldots, \lambda_{in^i}}x^i$, each diagonal entry $\lambda_{ij}$ is randomly sampled from $[0.04, 0.06]$. 
The query radius $\delta_k$ and the step size $\gamma_k$ are set to be $\delta_k = 1/(k+10)^{0.6}$ and $\gamma_k = 1/(k+10^3)^{0.9}$, respectively. 
Regarding the feedback delay $d^i_k$, we consider the case when $d^i_k$ is upper bounded by $\bar{d}_k = 10^3$ while the realized values of $d^i_k$ vary across different buildings. 
In addition, several experiments are conducted under the setup that $d^i_k$ is homogeneous in this group of buildings and grows sublinearly. 
To compare with the existing work, we implement the method in \cite{heliou20agradient} with $\delta_k = 1/(k+10)^{0.35}$ and $\gamma_k = 1/(k+10^3)^{0.9}$ as required by the associated convergence theorem. 
Two metrics are employed to measure the performance of Algorithm~\ref{alg:md-delay}, which include the relative distance between the NE and the perturbed actions, $\norm{\hat{X}_{k} - x_*}_2 / \norm{x_*}_2$, and the difference between the potential function's optimal value and the values at the perturbed actions, $\Phi(\hat{X}_{k}) - \Phi_*$. 

The numerical results are illustrated in Fig~\ref{fig:thm_ctrl4}. 
It can be observed that when the feedback delay $d^i_k$ grows no faster than $O(\sqrt{k})$, the convergence rates of the generated sequences are dominated by the first-order estimation error and no significant difference is noted among $\bar{d}_k = 10^3$, $d_k = k^{0.1}$, $d_k = 5 k^{0.5}$, and $d_k = 10 k^{0.5}$. 
When the delay time $d^i_k$ grows faster and even approaches the rate of $O(k)$, the errors induced by the feedback delay outweigh those induced by the estimation error, as reflected in the curves associated with $d_k = 5 k^{0.75}$ and $d_k = 5 k^{0.99}$. 
Furthermore, the results in Fig.~\ref{fig:thm_ctrl4} indicate that Algorithm~\ref{alg:md-delay} exhibits reduced variance, more consistent sequences of play, and faster convergence compared to the existing method in \cite{heliou20agradient}.

\begin{figure}
    \centering
    \includegraphics[width=0.45\textwidth]{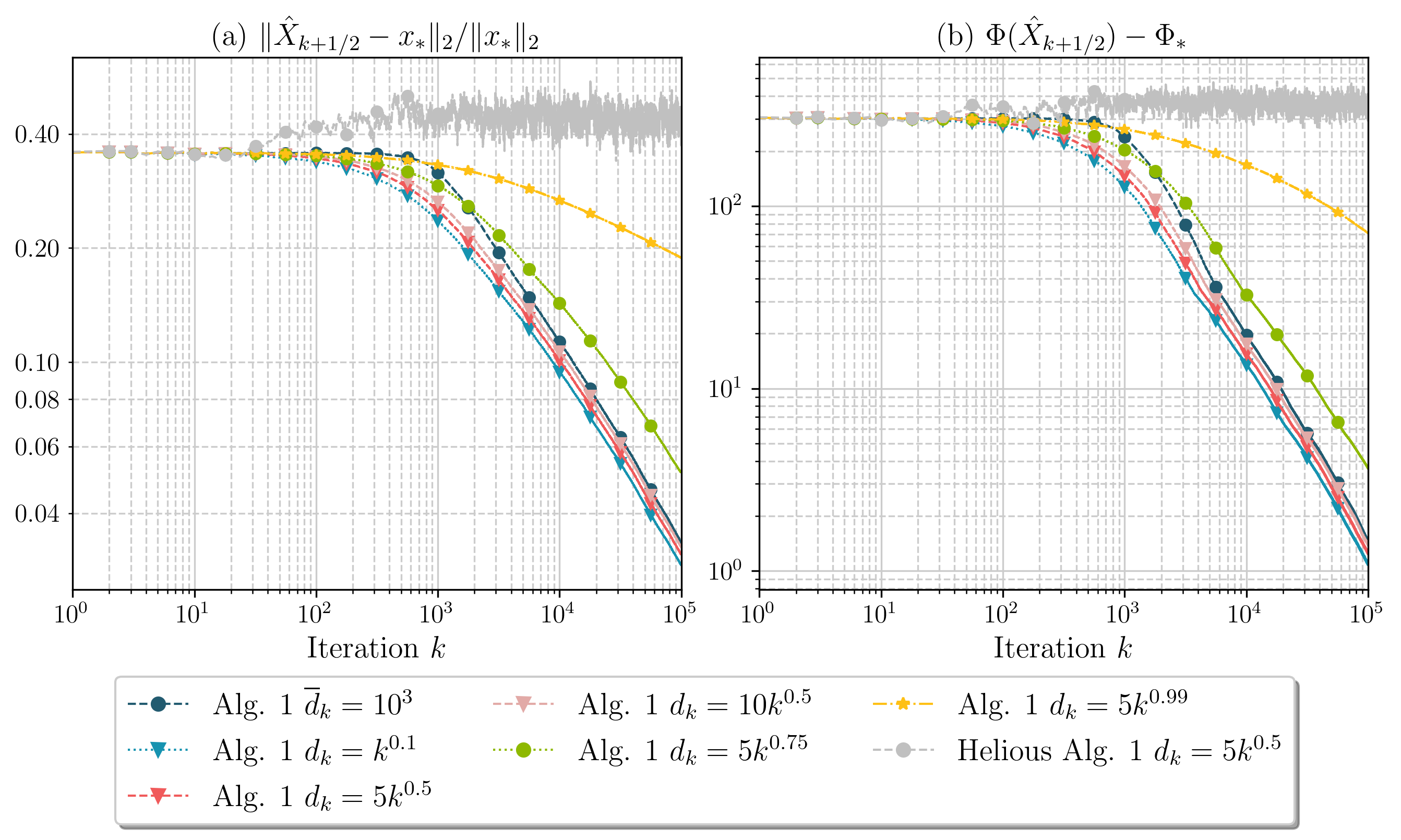}
    \caption{Performance of the Proposed Algorithm Confronted with Homogeneous and Heterogeneous Feedback Delays}
    \label{fig:thm_ctrl4}
\end{figure}



\section{Conclusion}
This paper studies the problem of bandit learning in multi-player continuous games, which is further complicated by information delays. 
Compared with the existing method introduced in \cite{heliou20agradient}, the algorithm proposed in this paper incorporates the residual pseudo-gradient estimation strategy and the mirror descent iteration, which loosens the conditions imposed upon the query radius and the step sizes. 
The a.s. convergence of the actual sequences of play generated by the proposed algorithm is established for pseudo-monotone plus games. 
One important direction for future research concerns the case where the feedback delays grow as the iteration proceeds and at the same time, they are heterogeneous across the participants. 
Another potential future direction resides in designing an algorithm that could tackle a more general class of multi-player games, such as merely monotone games, which are prevalent in the modeling of practical problems. 
Nevertheless, when applied to merely monotone games, mirror descent and most of its variants fail to converge and are prone to be trapped in spurious solutions.  

\ifarxiv
\appendices 
\section*{Appendix}
\addcontentsline{toc}{section}{Appendix}
\renewcommand{\thesubsection}{\Alph{subsection}}

\newtheorem{appdxlemma}{Lemma}
\numberwithin{appdxlemma}{subsection} 
\numberwithin{equation}{subsection} 

\subsection{Properties of the Feedback Utilization Strategy}\label{appd:delay-strategy}
\begin{proof}(Proof of Lemma~\ref{le:fdbk-prop})
For claim $(\romannum{1})$, our focus will be proving that $K^i_{\varnothing}(k) \leq \bar{d}(k) + 1$ when $\bar{d}(k) + 1 < k$, since it is evident that $K^i_{\varnothing}(k) \leq k$. 
For a fixed $k \in \nset{}{+}$, we denote the constant $D_k \coloneqq \bar{d}(k)$, and the delay times up to the iteration $k$ satisfy $d^i_t \leq D_k$ for $t=1, \ldots, k$. 
If each realized $\hat{J}^i_t$ arrives $D_k$ iterations later, then throughout the iterate, $(\hat{J}^i_t)_{t=1, \cdots, k - D_k}$ will be received in sequence. 
At the $t$-th iteration with $D_k + 2 \leq t \leq k$, player $i$ will have access to the estimate $G^i_{t-D_k}$ and employ it in the action update. 
The total count of iterations without action update equals $D_k + 1$.
Now we return to the case that the delay time is characterized by the random variable $d^i_t$. 
Since $d^i_t \leq D_k$, the estimates $(G^i_{t})_{t=2, \ldots, k-D_k}$ will be available no later than the constant case above and hence will be used in the action update, which further implies that $K^i_{\varnothing}(k) \leq \bar{d}(k) + 1$. 

For claim $(\romannum{2})$, we will prove it by induction. 
Before proceeding to the analysis, we make some notational conventions regarding the iteration indices and the sequence $(s^i(k))_{k \in \nset{}{+}}$. 
Recall that when the estimate cache $\mathcal{P}^i_G$ is empty at iteration $k$, $s^i(k)$ is manually set to $1$. 
We let $(\ell_m)_{m \in \nset{}{+}} \coloneqq (k: s^i(k) \neq 1)_{k \in \nset{}{+}}$ to denote the iteration indicies with action update. 
In addition, we will use $\mathcal{P}^i_G(k)$ to represent the temporary state of the cache $\mathcal{P}^i_G$ at line 14 of the $k$-th iteration and $t \in \mathcal{P}^i_G(k)$ to indicate $G^i_t \in \mathcal{P}^i_G(k)$. 

\textbf{Initial condition: }For $\ell_1$ and $G^i_{s^i(\ell_1)}$, we have either $s^i(\ell_1) + \bar{d}(s^i(\ell_1)) \geq \ell_1$ or $s^i(\ell_1)-1 + \bar{d}(s^i(\ell_1)-1) \geq \ell_1$; 
otherwise, $G^i_{s^i(\ell_1)}$ can be evaluated and consumed at an earlier iteration, since it is the first estimate and there is no queuing issue in $\mathcal{P}^i_G$. 

\textbf{Induction step: }For an arbitrary $k \in \nset{}{+}$, we assume that $s^i(\ell_k) + \bar{d}(s^i(\ell_k)) \geq \ell_k$ and need to show that the statement hold for $k+1$, i.e., $s^i(\ell_{k+1}) + \bar{d}(s^i(\ell_{k+1})) \geq \ell_{k+1}$. \\
\textbf{Case I ($s^i(\ell_{k+1}) \in \mathcal{P}^i_{G}(\ell_k)$): }
In this case, the first observation is that $s^i(\ell_{k+1}) \geq s^i(\ell_{k}) + 1$ since $\mathcal{P}^i_{G}$ pops out the estimate with the earliest timestamp and $s^i(\ell_k)$ is used first. 
Moreover, given that $G^i_{\ell_{k+1}}$ is already available at the $\ell_k$-th iteration, we have $\ell_{k+1} = \ell_k + 1$. 
Altogether, $s^i(\ell_{k+1}) + \bar{d}(s^i(\ell_{k+1})) \geq s^i(\ell_{k}) + 1 + \bar{d}(s^i(\ell_{k})) \geq \ell_k + 1 = \ell_{k+1}$. \\
\textbf{Case II ($s^i(\ell_{k+1}) \notin \mathcal{P}^i_{G}(\ell_k)$): } In the case where $s^i(\ell_{k+1}) \notin \mathcal{P}^i_{G}(\ell_k)$, it can be possible that $\abs{\mathcal{P}^i_G(\ell_k)} = 1$ or $\abs{\mathcal{P}^i_G(\ell_k)} \geq 2$ but $G^i_{s^i(\ell_{k+1})}$ becomes available at $\ell_k + 1$ and has the earliest timestamp among all available estimates in $\mathcal{P}^i_{G}(\ell_{k+1})$. 
In either case, we must have either $s^i(\ell_{k+1}) + \bar{d}(s^i(\ell_{k+1})) \geq \ell_{k+1}$ or $s^i(\ell_{k+1})-1 + \bar{d}(s^i(\ell_{k+1})-1) \geq \ell_{k+1}$ in the similar vein of the initial condition; otherwise, $G^i_{s^i(\ell_{k+1})}$ can be evaluated and consumed at an earlier stage or $s^i(\ell_{k+1}) \in \mathcal{P}^i_{G}(\ell_k)$. 
\end{proof}

\subsection{Proof of Lemma~\ref{le:bounded-rpg}}\label{appd:bounded-rpg}
\begin{proof}
We start by deriving a recurrent inequality for the sequence $(\hat{G}_k)_{k \in \nset{}{+}} = (G_{s(k)})_{k \in \nset{}{+}}$. 
For the segment corresponding to player $i$, we notice that
\begin{align*}
& \norm{\hat{G}^i_k}_* = \norm{G^i_{s^i(k)}}_* \leq \Bnorm{\frac{n^i}{\delta_{s^i(k)}}(\hat{J}^i_{s^i(k)} - \hat{J}^i_{s^i(k)-1})u^i_{s^i(k)}}_* \\
& \overset{(a)}{\leq} \frac{n^i}{\delta_{s^i(k)}} \abs{\langle \nabla_{x}J^i(\Tilde{X}), \hat{X}_{s^i(k)} - \hat{X}_{s^i(k)-1}\rangle}\norm{u^i_{s^i(k)}}_* \\
& \overset{(b)}{\leq} \frac{n^i}{\delta_{s^i(k)}} \bar{\nabla}_i \cdot \norm{\hat{X}_{s^i(k)} - \hat{X}_{s^i(k)-1}} \cdot \bar{u}^i_*, 
\end{align*}
where $(a)$ follows from the mean value theorem, and $\Tilde{X}$ denotes some convex combination of $\hat{X}_{s^i(k)}$ and $\hat{X}_{s^i(k)-1}$; 
in $(b)$, we take the maximum $\bar{\nabla}_i \coloneqq \max_{x \in \mathcal{X}} \norm{\nabla_x J^i(x)}_*$ and denote the constant $\bar{u}^i_* \coloneqq \norm{u}_*$ where $\norm{u}_2 = 1$. 
Based on this, we next derive a bound for $\norm{\hat{X}_{s^i(k)} - \hat{X}_{s^i(k)-1}}$ as follows
\begin{align*}
& \norm{\hat{X}_{s^i(k)} - \hat{X}_{s^i(k)-1}} \overset{(a)}{=} \norm{X_{s^i(k)} - X_{s^i(k)-1} + \delta_{s^i(k)}\varphi_{s^i(k)} \\
& \qquad - \delta_{s^i(k)-1}\varphi_{s^i(k)-1}} \overset{(b)}{\leq}  \norm{X_{s^i(k)} - X_{s^i(k)-1}} + \delta_{s^i(k)}\Bar{\varphi}, 
\end{align*}
where we let $\varphi_{s^i(k)} \coloneqq R^{-1}(p - X_{s^i(k)} + Ru_{s^i(k)})$ in $(a)$; 
for $(b)$, we can find a constant $\bar{\varphi}$ such that $\bar{\varphi} \geq \norm{\varphi_{s^i(k)} - (\delta_{s^i(k)-1}/\delta_{s^i(k)})\varphi_{s^i(k)-1}}$ given that $\varphi_{s^i(k)}$ resides inside a bounded set and the ratio $\delta_{s^i(k)-1}/\delta_{s^i(k)}$ is uniformly upper bounded by some constant. 
By applying the MD iterate and the $1/\Tilde{\mu}$-Lipschitz continuity of the mirror map from Lemma~\ref{le:md-lips}, we have 
\begin{align*}
& \norm{X_{s^i(k)} - X_{s^i(k)-1}}  = \norm{\nabla\psi^*(\nabla \psi(X_{s^i(k)-1}) - \gamma_{s^i(k)-1}\hat{G}_{s^i(k)-1})\\
& \qquad 
 - \nabla \psi^*(\nabla \psi(X_{s^i(k)-1}))}  \leq \frac{\gamma_{s^i(k)-1}}{\Tilde{\mu}}\norm{\hat{G}_{s^i(k)-1}}_*. 
\end{align*}
Thus, the pseudo-gradient of the game can be characterized by the following relation:
\begin{align*}
\norm{\hat{G}_k}_* & \leq \sum_{i \in \mathcal{N}} \norm{\hat{G}^i_k}_* \leq \sum_{i \in \mathcal{N}}\Big(\frac{\gamma_{s^i(k)-1}}{\delta_{s^i(k)}}\frac{n^i\bar{\nabla}_i\bar{u}^i_*}{\Tilde{\mu}}\norm{\hat{G}_{s^i(k)-1}}_*\Big)+\beta_1 \bar{\varphi}  \\
& \leq \frac{\beta_1}{\Tilde{\mu}}\sum_{i \in \mathcal{N}}\Big(\frac{\gamma_{s^i(k)-1}}{\delta_{s^i(k)}}\norm{\hat{G}_{s^i(k)-1}}_*\Big) + \beta_1 \bar{\varphi}, 
\end{align*}
where $\beta_1 \coloneqq \sum_{i \in \mathcal{N}} n^i\bar{\nabla}_i \bar{u}^i_*$. 
For an arbitrary random sample $\omega \in \Omega$, we can obtain the following deterministic inequality:
\begin{align*}
\norm{\hat{G}_k}_*(\omega) \leq \frac{\beta_1}{\Tilde{\mu}}\sum_{i \in \mathcal{N}}\Big(\frac{\gamma_{s^i(k)-1}}{\delta_{s^i(k)}}\norm{\hat{G}_{s^i(k)-1}}_*(\omega)\Big) + \beta_1 \bar{\varphi}. 
\end{align*}
Lemma~\ref{le:fdbk-prop} suggests that $s^i(k) \geq k - \bar{d}(s^i(k)) \geq k - \bar{d}(k)$, and we define the map $\pi_{\omega}: \nset{}{+} \to \nset{}{+}$ parameterized by $\omega \in \Omega$ as: 
\begin{align*}
\pi_{\omega}(k) = \argmax_{1\vee(k - \bar{d}(k))\leq t\leq k-1, t \in \nset{}{+}} \frac{\gamma_{t}}{\delta_{t+1}}\norm{\hat{G}_{t}}_*(\omega). 
\end{align*}
By definition, $\pi_{\omega}(k) < k$, $\forall \omega$ and $k$. 
With the introduction of $\pi_{\omega}$, we can tackle the heterogeneity in $s^i(k)$ and obtain: 
\begin{align*}
\norm{\hat{G}_k}_*(\omega) & \leq \frac{\beta_1}{\Tilde{\mu}}\sum_{i \in \mathcal{N}}\big(\frac{\gamma_{\pi_{\omega}(k)}}{\delta_{\pi_{\omega}(k)+1}}\norm{\hat{G}_{\pi_{\omega}(k)}}_*(\omega)\big) + \beta_1 \bar{\varphi} \\
& = \beta_2(\pi_{\omega}(k))\norm{\hat{G}_{\pi_{\omega}(k)}}_*(\omega) + \beta_1 \bar{\varphi}, 
\end{align*}
where we let $\beta_2(t) \coloneqq\frac{\beta_1N}{\Tilde{\mu}}\cdot \frac{\gamma_{t}}{\delta_{t+1}}$. 
Observe that as $k \to \infty$, it follows that $k - \bar{d}_k \to \infty$, which further implies that $\beta_2(\pi_{\omega}(k)) \to 0$. 
Let $\bar{\beta}_2(k) \coloneqq \beta_2(k) \vee 1$, and we can recursively construct a constant $g_\star$ that could serve as a worst-case upper bound regardless of $\omega$ as 
\begin{align*}
g_\star = \beta_1\bar{\varphi} (1 + \sum_{\ubar{T}=1}^{K_\star-1}\prod_{t=\ubar{T}}^{K_\star-1}\bar{\beta}_2(t)), 
\end{align*}
\Tblue{where} we can find a constant index $K_\star$ independent of $\omega$, such that $\beta_2(k) < \varepsilon$ for some $\varepsilon < 1$ and all $k \geq K_\star$. 
For another thing, for an arbitrary $k \in \nset{}{+}$, $\norm{\hat{G}_k}_*(\omega)$ can be recurrently upper bounded regarding the sequence
$(\norm{\hat{G}_k}_*, \norm{\hat{G}_{\pi_{\omega}(k)}}_*, \norm{\hat{G}_{(\pi_{\omega})^2(k)}}_*, \ldots, \norm{\hat{G}_{1}}_*)$, where $\omega$ is omitted for brevity. 
If $\pi_{\omega}(k) < K_{\star}$, there will be less than $k-1$ recurrent inequalities to link $\norm{\hat{G}_k}_*$ back to $\norm{\hat{G}_1}_* = 0$. 
As such, the constant $g_\star$ serves as a uniform upper bound for all the $\norm{\hat{G}_k}_*$ with $\pi_{\omega}(k) < K_{\star}$. 
In the case where $\pi_{\omega}(k) \geq K_{\star}$, we focus on the latter portion of the estimate sequence, i.e., $(\norm{\hat{G}_k}_*, \norm{\hat{G}_{\pi_{\omega}(k)}}_*, \ldots, \norm{\hat{G}_{K_\Delta}}_*, \norm{\hat{G}_{\pi_{\omega}(K_\Delta)}}_*)$ with $\pi_{\omega}(K_\Delta) < K_\star \leq K_\Delta$ and $\norm{\hat{G}_{\pi_{\omega}(K_\Delta)}}_*(\omega) \leq g_\star$. 
For this subsequence, we have $\norm{\hat{G}_t}_*(\omega) \leq \varepsilon \norm{\hat{G}_{\pi_{\omega}(t)}}_*(\omega) + \beta_1\bar{\varphi}$, which gives us a stable linear discrete-time system with $\varepsilon < 1$. 
Thus, there exists a constant $\bar{g}_\star$ such that $\sup_{k \in \nset{}{+}, \omega \in \Omega} \norm{\hat{G}_k}_*(\omega) \leq \bar{g}_\star$. 
\end{proof}

\subsection{Almost-Sure Convergence of the Proposed Algorithm}\label{appd:convg}

To facilitate our later discussion, denote the event $E^i_k \coloneqq \{\mathcal{P}^i_{G} \neq \varnothing \text{\;at iteration\;}k\}$ and notice that $E^i_k \in \Tilde{\mathcal{F}}_k$. 
In addition, let $E_k \coloneqq \cap_{i \in \mathcal{N}}E^i_k$. 

\begin{appdxlemma}\label{le:insum-gamma}
Suppose that step size $\gamma_k = \gamma_0/(k + K_{\gamma})^{\alpha_\gamma}$ with $\alpha_\gamma \leq 1$, and Assumption~\ref{asp:delay} holds. Then, $\sum_{k \in \nset{}{}} \gamma_k \mathds{1}_{E_k}(\omega) = \infty$ for all $\omega \in \Omega$. 
\end{appdxlemma}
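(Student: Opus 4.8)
The plan is to show that the iterations at which \emph{all} caches are simultaneously nonempty are dense enough that the step sizes accumulated over them still diverge, for every fixed sample $\omega$. The starting point is that $\sum_{k}\gamma_k=\infty$ whenever $\alpha_\gamma\leq 1$, so it suffices to argue that the iterations discarded by the indicator $\mathds{1}_{E_k}$ are comparatively negligible. Since Lemma~\ref{le:fdbk-prop}$(\romannum{1})$ is a sure statement (it holds for every realization of the delays), all the estimates below are made pathwise in $\omega$.

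First I would convert the per-player emptiness bound into a bound on the number of ``bad'' iterations. Writing $E_k^c=\cup_{i\in\playerN}(E^i_k)^c$ and counting, a union bound together with Lemma~\ref{le:fdbk-prop}$(\romannum{1})$ yields, for every $K$,
\begin{align*}
B_K\coloneqq\sum_{k=1}^{K}\mathds{1}_{E_k^c}(\omega)\leq\sum_{i\in\playerN}K^i_\varnothing(K)\leq N(\bar{d}(K)+1)=N(K^{\alpha_d}+\bar{d}+1).
\end{align*}
Because $\alpha_d<1$, the count of bad iterations up to $K$ grows strictly sublinearly, whereas the total number of iterations grows linearly.

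Next I would exploit this sparsity through a dyadic blocking argument, which treats the case $\alpha_\gamma<1$ and the boundary case $\alpha_\gamma=1$ uniformly. Partition $\nset{}{+}$ into blocks $I_m\coloneqq\{k:2^m\leq k<2^{m+1}\}$, each of size $2^m$. The number of bad iterations inside $I_m$ is at most $B_{2^{m+1}}\leq N(2^{(m+1)\alpha_d}+\bar{d}+1)$, and since $\alpha_d<1$ this is $\leq\tfrac{1}{2}2^m$ for all $m$ beyond some index $M_0$; hence at least half of the indices in each such block lie in $E_k$. Using the monotonicity of $\gamma_k$ to bound each surviving term from below by $\gamma_{2^{m+1}}\geq\gamma_0\,2^{-(m+2)\alpha_\gamma}$, the contribution of block $I_m$ to the restricted sum is at least a constant multiple of $2^{m}\cdot 2^{-(m+2)\alpha_\gamma}=\text{const}\cdot 2^{m(1-\alpha_\gamma)}$. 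Summing over $m\geq M_0$ then gives divergence: the exponent $1-\alpha_\gamma$ is positive when $\alpha_\gamma<1$, while for $\alpha_\gamma=1$ each block contributes a fixed positive constant and there are infinitely many blocks.

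The main obstacle I anticipate is the boundary regime $\alpha_\gamma=1$, where $\sum_k\gamma_k$ diverges only logarithmically, so crude tail estimates that merely compare orders of magnitude are too lossy. The blocking argument is designed precisely for this: it is not enough that bad iterations are few, one must show they form a vanishing \emph{fraction} of each block so that a definite positive share of the step-size mass in every block survives. A secondary point requiring care is that every estimate must hold for each fixed $\omega$; this is legitimate because the sparsity bound rests only on Lemma~\ref{le:fdbk-prop}$(\romannum{1})$, which holds surely rather than merely almost surely.
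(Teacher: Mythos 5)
Your proof is correct, and it rests on the same two pillars as the paper's own argument: the pathwise counting bound of Lemma~\ref{le:fdbk-prop}, claim~(i), combined with a union bound over the $N$ players (so that at most $N(\bar{d}(K)+1)$ of the first $K$ iterations can have $\mathds{1}_{E_k}=0$), together with the monotone decrease of $\gamma_k$. Where you genuinely differ is in how divergence of the surviving mass is extracted. The paper uses a rearrangement argument: since $\gamma_k$ is decreasing, the worst case is that all bad iterations are front-loaded, giving $\sum_{k=1}^{K}\gamma_k\mathds{1}_{E_k}(\omega)\geq\sum_{k=N\lceil\bar{d}(K)+2\rceil}^{K}\gamma_k$, and this tail is then bounded below by an explicit integral, with two separate computations for $\alpha_\gamma<1$ (power-type divergence) and $\alpha_\gamma=1$ (logarithmic divergence, where $\alpha_d<1$ enters through $\log(K+K_\gamma)-\log(N\lceil\bar{d}(K)+2\rceil+K_\gamma)\to\infty$). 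Your dyadic blocking instead shows that beyond some $M_0$ the bad iterations occupy at most half of each block $I_m$, so each block contributes at least a constant times $2^{m(1-\alpha_\gamma)}$; this treats both exponent regimes in one computation and makes explicit that what matters is the bad iterations forming a vanishing \emph{fraction}. The paper's route buys brevity and an explicit lower bound on the partial sums; yours buys uniformity across the two cases and dispenses with the rearrangement step. Two small remarks: your bound $\gamma_{2^{m+1}}\geq\gamma_0 2^{-(m+2)\alpha_\gamma}$ additionally requires $K_\gamma\leq 2^{m+1}$, so $M_0$ should be enlarged to absorb $K_\gamma$ (cosmetic); and your stated concern that a tail estimate would be too lossy at $\alpha_\gamma=1$ is unfounded---the paper's tail sum still diverges logarithmically precisely because $\bar{d}(K)$ grows like $K^{\alpha_d}$ with $\alpha_d<1$.
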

\begin{proof}
For arbitrary $\omega \in \Omega$, we have 
\begin{align*}
& \sum_{k\in \nset{}{+}} \gamma_k \mathds{1}_{E_k}(\omega) = \lim_{K \to \infty} \sum_{k=1}^{K} \gamma_k \mathds{1}_{E_k}(\omega) \overset{(a)}{\geq} \lim_{K \to \infty}\sum^K_{K \wedge \ceil{\bar{d}(K) + 2}N}\gamma_k \\
& = \lim_{K \to \infty}\sum^K_{\ceil{\bar{d}(K) + 2}N}\gamma_k \quad \cdots \cdots \quad (\star), 
\end{align*}
where $(a)$ is a result of Lemma~\ref{le:fdbk-prop}, i.e., from the perspective of each player, for the first $K$ iterations, there are at most $\bar{d}(K)+1$ iterations without action update. 
On account of the monotonically decreasing property of $\gamma_k$, the worst-case scenario is that these $\bar{d}(K)+1$ iterations sit at the very beginning of the process and are different across this group of players, contributing to a factor of $N$. 
When $\alpha_\gamma < 1$, $(\star) \geq \lim_{K \to \infty} \int_{\ceil{\bar{d}(K) + 2}N}^{K}\gamma_0(s + K_{\gamma})^{-\alpha_\gamma}ds = \lim_{K \to \infty}\frac{\gamma_0}{1 - \alpha_{\gamma}}[(s + K_{\gamma})^{1 - \alpha_{\gamma}}]^K_{\ceil{\bar{d}(K) + 2}N} = \infty$ since $1 - \alpha_{\gamma} > 0$ and $\Bar{d}(K) \propto K^{\alpha_d}$ with $\alpha_d < 1$. 
Likewise, when $\alpha_\gamma = 1$, $(\star) \geq \lim_{K \to \infty} \int_{\ceil{\bar{d}(K) + 2}N}^{K}\gamma_0(s + K_{\gamma})^{-1}ds = \lim_{K \to \infty}\gamma_0[\log(s + K_{\gamma})]^K_{\ceil{\bar{d}(K) + 2}N} = \infty$ given $\alpha_d < 1$.   
\end{proof}

In light of this lemma, we can now proceed to prove our main result, which establishes the a.s. convergence of the proposed algorithm. 

\begin{proof}(Proof of Theorem~\ref{thm:as-convg})
By applying the standing inequality of mirror descent (\cite[Lemma~A.2]{huang2023zeroth}), for an arbitrary CP $x_* \in \mathcal{X}$, we have:
\begin{align}\label{eq:std-ineq}
D(x_*, X_{k+1}) \leq D(x_*, X_k) - \gamma_k \langle \hat{G}_k, X_k - x_*\rangle + \frac{\gamma_k^2}{2\Tilde{\mu}}\norm{\hat{G}_k}_*^2. 
\end{align}
By the fact that $\sum_{k \in \nset{}{+}}\gamma_k^2 < \infty$ from the assumption and $\sup_{k \in \nset{}{+}, \omega \in \Omega} \norm{\hat{G}_k}_* \leq \bar{g}_{\star}$ from Lemma~\ref{le:bounded-rpg}, we can claim that $\sum_{k\in \nset{}{+}} \gamma_k^2/2\Tilde{\mu}\norm{\hat{G}_k}_*^2 < \infty$, i.e., this part will play a comparatively negligible role in the convergence analysis. 
If $E^i_k$ happens, $\langle\hat{G}^i_k, X^i_k - x^i_*\rangle$ can be decomposed as
\begin{align*}
& \langle\hat{G}^i_k, X^i_k - x^i_*\rangle = \langle \nabla_{x^i}J^i(X_{k}), X^i_k - x^i_*\rangle + \sum_{t=s^i(k) + 1}^{k}\langle \nabla_{x^i}J^i(X_{t-1}) \\
& \quad - \nabla_{x^i}J^i(X_{t}) , X^i_k - x^i_*\rangle + \langle B^i_{s^i(k)} + V^i_{s^i(k)}, X^i_k - x_*\rangle
\end{align*}
The stacked inner product in \eqref{eq:std-ineq} can then be examined individually and be decomposed as follows:
\begin{align*}
& -\langle \hat{G}_k, X_k - x_*\rangle = -\langle F(X_k), X_k - x_*\rangle\mathds{1}_{E_k} - \sum_{i \in \mathcal{N}} \mathds{1}_{(E_k)^c\cap E^i_k}\cdot \\
& \quad  \langle \nabla_{x^i} J^i(X_k), X^i_k - x^i_*\rangle - \sum_{i \in \mathcal{N}} \Big(\sum_{t=s^i(k) + 1}^{k}\langle \nabla_{x^i}J^i(X_{t-1}) - \nabla_{x^i}J^i(X_{t}), \\
& \qquad X^i_k - x^i_*\rangle + \langle B^i_{s^i(k)} + V^i_{s^i(k)}, X^i_k - x_*\rangle\Big)\mathds{1}_{E^i_k}\\
& \overset{(a)}{\leq} -\langle F(X_k), X_k - x_*\rangle\mathds{1}_{E_k} - \sum_{i \in \mathcal{N}} \mathds{1}_{(E_k)^c\cap E^i_k}\langle \nabla_{x^i} J^i(X_k), X^i_k - x^i_*\rangle\\
& + \sum_{i \in \mathcal{N}}\Big(\sum_{t=s^i(k) + 1}^{k}L^iD_{\mathcal{X}^i}\norm{X_{t-1} - X_{t}} + \langle B^i_{s^i(k)} + V^i_{s^i(k)}, X^i_k - x^i_*\rangle\Big)\mathds{1}_{E^i_k} \\
& \overset{(b)}{\leq} -\langle F(X_k), X_k - x_*\rangle\mathds{1}_{E_k} - \sum_{i \in \mathcal{N}} \mathds{1}_{(E_k)^c\cap E^i_k} \langle \nabla_{x^i} J^i(X_k), X^i_k - x^i_*\rangle\\
& + \sum_{i \in \mathcal{N}}\Big(\sum_{t=s^i(k) + 1}^{k}L^iD_{\mathcal{X}^i}\norm{X_{t-1} - X_{t}} + \alpha_BD_{\mathcal{X}^i}\delta_{s^i(k)} \\
& \qquad + \langle V^i_{s^i(k)}, X^i_k - x^i_*\rangle\mathds{1}_{E^i_k} \Big),
\end{align*}
where the relation (a) follows from the $L^i$-Lipschitz continuity of $\nabla_x J^i$; the relation (b) can be derived by letting $D_{\mathcal{X}^i} \coloneqq \max_{x,y\in \mathcal{X}^i}\norm{x - y}$ and applying Lemma~\ref{le:bias}.
For each $\norm{X_{t-1} - X_t}$, it entails Lemma~\ref{le:md-lips} that
$\norm{X_{t-1} - X_{t}} = \norm{\nabla \psi^*(\nabla \psi(X_{t-1})) - \nabla \psi^*(\nabla \psi(X_{t-1}) - \gamma_{t-1}\hat{G}_{t-1})} \leq \gamma_{t-1}\norm{\hat{G}_{t-1}}_*/\Tilde{\mu} \leq \gamma_{t-1}\bar{g}_\star/\Tilde{\mu}$. 
Lemma~\ref{le:fdbk-prop} indicates that $s^i(k) \geq k - \bar{d}(k)$ for all $i$. 
Furthermore, since $X^i_k, \mathds{1}_{E^i_k} \in \Tilde{\mathcal{F}}_k$ while $V^i_{s^i(k)}$ is independent of $\Tilde{\mathcal{F}}_k$, $\expt{}{\langle V^i_{s^i(k)}, X^i_k - x^i_*\rangle\mathds{1}_{E^i_k} \mid \Tilde{\mathcal{F}}_k} = \langle \expt{}{V^i_{s^i(k)}\mid \Tilde{\mathcal{F}}_k}, X^i_k - x^i_*\rangle\mathds{1}_{E^i_k} = 0$.
If we further take the conditional expectation $\expt{}{\cdot \mid \Tilde{\mathcal{F}}_k}$ of both sides of the above inequality, it yields that
\begin{align*}
& \expt{}{-\langle \hat{G}_k, X_k - x_*\rangle \mid \Tilde{\mathcal{F}}_k} \leq -\langle F(X_k), X_k - x_*\rangle\mathds{1}_{E_k} \\
& + \sum_{i \in \mathcal{N}} \mathds{1}_{(E_k)^c\cap E^i_k} \bar{\nabla}_iD_{\mathcal{X}^i} + \beta_3\sum_{t=k-\bar{d}(k)+1}^{k}\gamma_{t-1} + \beta_4\delta_{k-\bar{d}(k)}, 
\end{align*}
where we let $\beta_3 \coloneqq \sum_{i \in \mathcal{N}}\frac{L^iD_{\mathcal{X}^i}\bar{g}_{\star}}{\Tilde{\mu}}$ and $\beta_4 \coloneqq \sum_{i \in \mathcal{N}} \alpha_BD_{\mathcal{X}^i}$. 
We then take $\expt{}{\cdot \mid \Tilde{\mathcal{F}}_k}$ of both sides of \eqref{eq:std-ineq} and apply the bound derived above to procure:
\begin{align*}
\begin{split}
& \expt{}{D(x_*, X_{k+1})\mid \Tilde{\mathcal{F}}_k} \leq D(x_*, X_{k})-\gamma_k\langle F(X_k), X_k - x_*\rangle\mathds{1}_{E_k} + \gamma_k \\
& \sum_{i \in \mathcal{N}}\mathds{1}_{(E_k)^c\cap E^i_k} \bar{\nabla}_iD_{\mathcal{X}^i} + \gamma_k\beta_3\sum_{t=k-\bar{d}(k)+1}^{k}\gamma_{t-1} + \beta_4\gamma_k\delta_{k-\bar{d}(k)} + \frac{\bar{g}_\star^2}{2\Tilde{\mu}}\gamma_k^2. 
\end{split}
\stepcounter{equation}\tag{\theequation}\label{eq:md-recr-ineq}
\end{align*}
Note that under Assumption~\ref{asp:delay-plus}$(\romannum{1})$, $(E^i_k)^c$ happens for only finitely many $k$, i.e., $\mathds{1}_{(E^i_k)^c} = 1$ for at most $\ceil{\bar{d} + 1}$ iterations, while under Assumption~\ref{asp:delay-plus}$(\romannum{2})$, $\mathds{1}_{(E_k)^c\cap E^i_k} \equiv 0$.
In either case, $\sum_{k \in \nset{}{+}}\gamma_k\sum_{i \in \mathcal{N}} \mathds{1}_{(E_k)^c\cap E^i_k} \bar{\nabla}_iD_{\mathcal{X}^i} < \infty$. 
For the next error term associated with delays, we have $\gamma_k\sum_{t=k-\bar{d}(k)+1}^{k}\gamma_{t-1} \leq \gamma_k\cdot \bar{d}(k) \gamma_{k-\bar{d}(k)} \propto O(k^{\alpha_d - 2\alpha_\gamma})$ and by choosing the parameters such that $2\alpha_\gamma - \alpha_d > 1$, we ensure that this term is summable. 
For the last two terms, trivially, $\gamma_k\delta_{k - \bar{d}(k)} \propto O(k^{-\alpha_{\gamma} - \alpha_{\delta}})$ and $\gamma_k^2 \propto O(k^{-2\alpha_{\gamma}})$, the summability of which follows from the assumptions $\alpha_{\gamma} + \alpha_{\delta} > 1$ and $\alpha_{\gamma} > 0.5$ imposed. 
By the Robbins-Siegmund theorem \cite[Thm.~1]{robbins1971convergence}, we arrive at the claims: 
$(\romannum{1})$ $D(x_*, X_{k})$ converges a.s. to a random variable that is finite a.s.;  
$(\romannum{2})$ $\sum_{k \in \nset{}{+}}\gamma_k\langle F(X_k), X_k - x_*\rangle\mathds{1}_{E_k} < \infty$ a.s. 
For each $\omega \in \Tilde{\Omega}$ with $\Tilde{\Omega}$ defined as a sample subset with probability one, by utilizing Lemma~\ref{le:insum-gamma}, i.e., $\sum_{k \in \nset{}{+}}\gamma_k\mathds{1}_{E_k}(\omega) = \infty$, we deduce that $\liminf_{k \to \infty} \langle F(X_k), X_k - x_*\rangle(\omega) = 0$. 
Thus, along a subsequence $(k_m)_{m \in \nset{}{+}}$, we have $\lim_{k \to \infty} \langle F(X_{k_m}), X_{k_m} - x_*\rangle(\omega) = 0$. 
By applying the boundedness of $\mathcal{X}$ and $X_k \in \mathcal{X}$, we can find a further subsequence $(\ell_m)_{m \in \nset{}{+}} \subseteq (k_m)_{m \in \nset{}{+}}$ such that $X_{\ell_m}(\omega) \to X_\star(\omega)$. 
Since $F$ is a continuous operator, $\lim_{m \to \infty} \langle F(X_{\ell_m}(\omega)), X_{\ell_m}(\omega) - x_*\rangle = \langle F(X_\star(\omega)), X_\star(\omega) - x_*\rangle = 0$. 
Since $x_*$ is a CP, $\langle F(x_*), X_\star(\omega) - x_*\rangle \geq 0$, which, together with the pseudo-monotone plus property of $F$, implies that $F(x_*) = F(X_\star(\omega))$. 
It then readily follows that for any $x \in \mathcal{X}$, $\langle F(X_\star(\omega)), x - X_\star(\omega)\rangle = \langle F(X_\star(\omega)), x - x_* + x_* - X_\star(\omega)\rangle \geq 0$, which implies that $X_\star(\omega)$ is also a CP. 
Then we can replace $x_*$ in \eqref{eq:md-recr-ineq} with $X_\star(\omega)$ and it follows that $D(X_\star(\omega), X_k)$ converges a.s.  
In addition, along the subsequence $(\ell_m)_{m \in \nset{}{+}}$, $D(X_\star(\omega), X_{\ell_m}(\omega)) \to 0$ by Assumption~\ref{asp:recip}. 
Therefore, $D(X_\star(\omega), X_k(\omega)) \to 0$ and we come to the conclusion that $X_k$ converges to a CP a.s. 
Thus, the convergence result also holds for the actual sequence of play $(\hat{X}_k)_{k \in \nset{}{+}}$ since $\delta_k \overset{k\to\infty}{\to} 0$.  
\end{proof}

\fi

\bibliographystyle{IEEEtran}
\bibliography{IEEEabrv,references}

\begin{thebibliography}{10}
\providecommand{\url}[1]{#1}
\csname url@samestyle\endcsname
\providecommand{\newblock}{\relax}
\providecommand{\bibinfo}[2]{#2}
\providecommand{\BIBentrySTDinterwordspacing}{\spaceskip=0pt\relax}
\providecommand{\BIBentryALTinterwordstretchfactor}{4}
\providecommand{\BIBentryALTinterwordspacing}{\spaceskip=\fontdimen2\font plus
\BIBentryALTinterwordstretchfactor\fontdimen3\font minus
  \fontdimen4\font\relax}
\providecommand{\BIBforeignlanguage}[2]{{%
\expandafter\ifx\csname l@#1\endcsname\relax
\typeout{** WARNING: IEEEtran.bst: No hyphenation pattern has been}%
\typeout{** loaded for the language `#1'. Using the pattern for}%
\typeout{** the default language instead.}%
\else
\language=\csname l@#1\endcsname
\fi
#2}}
\providecommand{\BIBdecl}{\relax}
\BIBdecl

\bibitem{li2022confluence}
T.~Li, G.~Peng, Q.~Zhu, and T.~Ba{\c{s}}ar, ``The confluence of networks,
  games, and learning a game-theoretic framework for multiagent decision making
  over networks,'' \emph{IEEE Control Systems Magazine}, vol.~42, no.~4, pp.
  35--67, 2022.

\bibitem{jiang2021game}
Z.~Jiang and J.~Cai, ``Game theoretic control of thermal loads in demand
  response aggregators,'' in \emph{2021 American Control Conference
  (ACC)}.\hskip 1em plus 0.5em minus 0.4em\relax IEEE, 2021, pp. 4141--4147.

\bibitem{kannan2013addressing}
A.~Kannan, U.~V. Shanbhag, and H.~M. Kim, ``Addressing supply-side risk in
  uncertain power markets: stochastic nash models, scalable algorithms and
  error analysis,'' \emph{Optimization Methods and Software}, vol.~28, no.~5,
  pp. 1095--1138, 2013.

\bibitem{zhou2021robust}
Z.~Zhou, P.~Mertikopoulos, A.~L. Moustakas, N.~Bambos, and P.~Glynn, ``Robust
  power management via learning and game design,'' \emph{Operations Research},
  vol.~69, no.~1, pp. 331--345, 2021.

\bibitem{liniger2019noncooperative}
A.~Liniger and J.~Lygeros, ``A noncooperative game approach to autonomous
  racing,'' \emph{IEEE Transactions on Control Systems Technology}, vol.~28,
  no.~3, pp. 884--897, 2019.

\bibitem{facchinei2003finite}
F.~Facchinei and J.-S. Pang, \emph{Finite-dimensional variational inequalities
  and complementarity problems}.\hskip 1em plus 0.5em minus 0.4em\relax
  Springer, 2003.

\bibitem{pavel2019distributed}
L.~Pavel, ``Distributed {GNE} seeking under partial-decision information over
  networks via a doubly-augmented operator splitting approach,'' \emph{IEEE
  Transactions on Automatic Control}, vol.~65, no.~4, pp. 1584--1597, 2019.

\bibitem{bianchi2022fast}
M.~Bianchi, G.~Belgioioso, and S.~Grammatico, ``Fast generalized {Nash}
  equilibrium seeking under partial-decision information,'' \emph{Automatica},
  vol. 136, p. 110080, 2022.

\bibitem{huang2022distributed}
Y.~Huang and J.~Hu, ``Distributed computation of stochastic {GNE} with partial
  information: An augmented best-response approach,'' \emph{IEEE Transactions
  on Control of Network Systems}, 2022.

\bibitem{shalev2012online}
S.~Shalev-Shwartz \emph{et~al.}, ``Online learning and online convex
  optimization,'' \emph{Foundations and Trends{\textregistered} in Machine
  Learning}, vol.~4, no.~2, pp. 107--194, 2012.

\bibitem{vu2021fast}
D.~Q. Vu, K.~Antonakopoulos, and P.~Mertikopoulos, ``Fast routing under
  uncertainty: Adaptive learning in congestion games via exponential weights,''
  \emph{Advances in Neural Information Processing Systems}, vol.~34, pp.
  14\,708--14\,720, 2021.

\bibitem{bravo2018bandit}
M.~Bravo, D.~Leslie, and P.~Mertikopoulos, ``Bandit learning in concave
  {N}-person games,'' \emph{Advances in Neural Information Processing Systems},
  vol.~31, 2018.

\bibitem{agarwal2010optimal}
A.~Agarwal, O.~Dekel, and L.~Xiao, ``Optimal algorithms for online convex
  optimization with multi-point bandit feedback.'' in \emph{Colt}.\hskip 1em
  plus 0.5em minus 0.4em\relax Citeseer, 2010, pp. 28--40.

\bibitem{lin2021optimal}
T.~Lin, Z.~Zhou, W.~Ba, and J.~Zhang, ``Optimal no-regret learning in strongly
  monotone games with bandit feedback,'' \emph{arXiv preprint
  arXiv:2112.02856}, 2021.

\bibitem{tatarenko2022rate}
T.~Tatarenko and M.~Kamgarpour, ``On the rate of convergence of payoff-based
  algorithms to {Nash} equilibrium in strongly monotone games,'' \emph{arXiv
  preprint arXiv:2202.11147}, 2022.

\bibitem{tatarenko2023convergence}
------, ``Convergence rate of learning a strongly variationally stable
  equilibrium,'' \emph{arXiv preprint arXiv:2304.02355}, 2023.

\bibitem{drusvyatskiy2022improved}
D.~Drusvyatskiy, M.~Fazel, and L.~J. Ratliff, ``Improved rates for
  derivative-free gradient play in strongly monotone games,'' in \emph{2022
  IEEE 61st Conference on Decision and Control (CDC)}.\hskip 1em plus 0.5em
  minus 0.4em\relax IEEE, 2022, pp. 3403--3408.

\bibitem{tatarenko2020bandit}
T.~Tatarenko and M.~Kamgarpour, ``Bandit online learning of {Nash} equilibria
  in monotone games,'' \emph{arXiv preprint arXiv:2009.04258}, 2020.

\bibitem{gao2022bandit}
B.~Gao and L.~Pavel, ``Bandit learning with regularized second-order mirror
  descent,'' in \emph{2022 IEEE 61st Conference on Decision and Control
  (CDC)}.\hskip 1em plus 0.5em minus 0.4em\relax IEEE, 2022, pp. 5731--5738.

\bibitem{zhang2022new}
Y.~Zhang, Y.~Zhou, K.~Ji, and M.~M. Zavlanos, ``A new one-point
  residual-feedback oracle for black-box learning and control,''
  \emph{Automatica}, vol. 136, p. 110006, 2022.

\bibitem{huang2023zeroth}
Y.~Huang and J.~Hu, ``Zeroth-order learning in continuous games via residual
  pseudogradient estimates,'' \emph{arXiv preprint arXiv:2301.02279}, 2023.

\bibitem{huang2022convergence}
------, ``On the convergence rates of a nash equilibrium seeking algorithm in
  potential games with information delays,'' \emph{arXiv preprint
  arXiv:2209.12078}, 2022.

\bibitem{zhang2022multi}
Y.~Zhang, R.~Zhang, G.~Li, Y.~Gu, and N.~Li, ``Multi-agent reinforcement
  learning with reward delays,'' \emph{arXiv preprint arXiv:2212.01441}, 2022.

\bibitem{heliou20agradient}
A.~H{\'e}liou, P.~Mertikopoulos, and Z.~Zhou, ``Gradient-free online learning
  in continuous games with delayed rewards,'' in \emph{Proceedings of the 37th
  International Conference on Machine Learning}, ser. Proceedings of Machine
  Learning Research, H.~D. III and A.~Singh, Eds., vol. 119.\hskip 1em plus
  0.5em minus 0.4em\relax PMLR, 13--18 Jul 2020, pp. 4172--4181.

\bibitem{mertikopoulos2022learning}
P.~Mertikopoulos, Y.-P. Hsieh, and V.~Cevher, ``Learning in games from a
  stochastic approximation viewpoint,'' \emph{arXiv preprint arXiv:2206.03922},
  2022.

\bibitem{kannan2019optimal}
A.~Kannan and U.~V. Shanbhag, ``Optimal stochastic extragradient schemes for
  pseudomonotone stochastic variational inequality problems and their
  variants,'' \emph{Computational Optimization and Applications}, vol.~74,
  no.~3, pp. 779--820, 2019.

\bibitem{zhou2022distributed}
Z.~Zhou, P.~Mertikopoulos, N.~Bambos, P.~Glynn, and Y.~Ye, ``Distributed
  stochastic optimization with large delays,'' \emph{Mathematics of Operations
  Research}, vol.~47, no.~3, pp. 2082--2111, 2022.

\bibitem{bubeck2014theory}
S.~Bubeck, ``Theory of convex optimization for machine learning,'' \emph{arXiv
  preprint arXiv:1405.4980}, vol.~15, 2014.

\bibitem{mertikopoulos2019learning}
P.~Mertikopoulos and Z.~Zhou, ``Learning in games with continuous action sets
  and unknown payoff functions,'' \emph{Mathematical Programming}, vol. 173,
  no.~1, pp. 465--507, 2019.

\bibitem{mertikopoulos2018optimistic}
P.~Mertikopoulos, B.~Lecouat, H.~Zenati, C.-S. Foo, V.~Chandrasekhar, and
  G.~Piliouras, ``Optimistic mirror descent in saddle-point problems: Going the
  extra(-gradient) mile,'' in \emph{International Conference on Learning
  Representations}, 2019.

\bibitem{juditsky2022unifying}
A.~Juditsky, J.~Kwon, and {\'E}.~Moulines, ``Unifying mirror descent and dual
  averaging,'' \emph{Mathematical Programming}, pp. 1--38, 2022.

\bibitem{robbins1971convergence}
H.~Robbins and D.~Siegmund, ``A convergence theorem for non-negative almost
  supermartingales and some applications,'' in \emph{Optimizing methods in
  statistics}.\hskip 1em plus 0.5em minus 0.4em\relax Elsevier, 1971, pp.
  233--257.

\end{thebibliography}

\end{document}